\documentclass[11pt,twoside,reqno,centertags]{amsart}
\usepackage{amsmath,amsthm,amsfonts,amssymb}
\advance\textheight by 6truemm
\newtheorem{theorem}{Theorem}[section]
\newtheorem{corollary}{Corollary}[section]
\newtheorem{proposition}{Proposition}[section]

\newtheorem{remark}{Remark}[section]

\usepackage{abraces}

\usepackage{fancybox,color}
\newcommand{\fred}{\color{red}}   
\newcommand{\fblue}{\color{blue}} 

\newcommand{\be}{\begin{equation} \label}
\newcommand{\ee}{\end{equation}}
\newcommand{\R}{{\mathbb R}}

\newcommand{\eps}{\varepsilon}

\numberwithin{equation}{section}

\begin{document}
\title[Oscillatory blow-up]
{Oscillatory blow-up and gradient estimates for semilinear heat equations}
	
\author[Pavol Quittner and Philippe Souplet]{Pavol Quittner$^{(1)}$ and Philippe Souplet$^{(2)}$}

\thanks{$^{(1)}$Department of Applied Mathematics and Statistics, Comenius University
Mlynsk\'a dolina, 84248 Bratislava, Slovakia. Email: quittner@fmph.uniba.sk} 

\thanks{$^{(2)}$Universit\'e Sorbonne Paris Nord, CNRS UMR 7539, LAGA,
93430 Villetaneuse, France. Email: souplet@math.univ-paris13.fr}

\date{}

\begin{abstract}
For reaction-diffusion with blow-up nonlinearities, we consider the question whether
the sup norm of any positive blow-up solution must be eventually monotone nondecreasing in time.
While some sufficient conditions are known, especially for radial solutions,
this natural and basic question for the blow-up theory does not 
seem to have been addressed so far in full generality. 

We construct surprising (nonradial) counter-examples of blow-up solutions with oscillatory $L^\infty$ norm, 
for any Sobolev supercritical power nonlinearity, which show that this property may fail.
In addition, this provides examples of type II blow-up for any supercritical power,
which considerably increases the known range of powers for which type II blow-up may occur.
Moreover, whereas all the type II blow-up rates known so far were at most polynomial,
the blow-up in our counter-examples can be arbitrarily singular.

As a related question, we clarify the gradient estimates obtained and used in previous works.
In particular we show that these estimates hold only at times when the $L^\infty$ norm 
is maximal with respect to the past.

\vskip 0.2cm
{\bf AMS Classification:} 35K58, 35K57, 35B40, 35B44

\vskip 0.1cm
{\bf Keywords:} semilinear heat equation, superlinear nonlinearity, oscillatory blow-up, 
type II blow-up, monotonicity, gradient estimates
\end{abstract}

\maketitle

\section{Introduction}
Let $\Omega$ be a uniformly smooth domain of $\mathbb{R}^n$ $(n\ge 1)$. 
We consider classical solutions of the semilinear heat equation: 
\be{eqE1} \begin{cases}
 u_t-\Delta u=f(u),&x\in \Omega,\ t>0,\\
 u=0,&x\in \partial\Omega,\  t>0,\\
 u(x,0)=u_0(x),&x \in \Omega,
\end{cases}
\ee
where  the boundary condition is omitted if $\Omega=\R^n$, and
\be{hypf}
 \hbox{$f\in C^1([0,\infty))$ 
  satisfies $f(0)\ge 0$ and $f(s)>0$ for $s>0$,\quad $u_0\in L^\infty(\Omega)$, $u_0\ge 0$.}
\ee 
Throughout this paper we will often use the notation
$$U(t):=\|u(t)\|_\infty,$$ 
where $u(t)=u(\cdot,t)$,
and we will also denote by $p_S$ the Sobolev exponent (i.e., $p_S=(n+2)/(n-2)$ for $n\ge 3$,
$p_S=\infty$ for $n\le 2$).
It is well known 
		that problem \eqref{eqE1} admits a unique, nonnegative maximal $L^\infty$-solution
		(see the beginning of Section~\ref{SecProofs} for precise definition).
Denoting by $T=T(u_0)\in(0,\infty]$ its maximal existence time, we have
	\be{BUalt}
	\hbox{either $T=\infty$, \ or else $\displaystyle\lim_{t\to T} U(t)=\infty$.}
\ee

\section{Monotonicity vs.~oscillation for the sup norm}
\subsection{Main results}
 In this article we are primarily interested in the following general question for
 blow-up solutions of problem \eqref{eqE1}:
\be{questionUmonot}
\hbox{Does $U(t)$ become nondecreasing close to the blow-up time ?}
\ee
and with its connections with other central topics in nonlinear parabolic equations, 
namely type I/II blow-up and gradient estimates.
We note that a positive answer to question \eqref{questionUmonot} would seem reasonable from a heuristic point of view.
Indeed, the occurence of blow-up indicates that, in some sense, the reaction term will eventually 
dominate the dissipative effect of the diffusion near maximum points, 
and solutions of the diffusion-free problem are time increasing owing to $f>0$.
While there are some sufficient conditions (see Remark~\ref{rem1}(i) below), this natural and basic question for the blow-up theory
has not been addressed so far in full generality,
and no counter-example seems to be known.
The first goal of this paper is to provide counter-examples showing that, somehow surprisingly,
the answer to question \eqref{questionUmonot} can be negative.

\begin{theorem} \label{thmNonMon}
Let $\Omega=\R^n$, $n\ge 3$, and $f(u)=u^p$ with $p>p_S$.
There exists $u_0\in L^\infty(\R^n)$, $u_0\ge 0$, such that
the classical solution of \eqref{eqE1}
undergoes nonmonotone $L^\infty$-blow-up at~$T=T(u_0)<\infty$.

More precisely,
there exists an increasing sequence $N_k\nearrow\infty$ 
with the following property: For any increasing sequence $\{M_k\}$ satisfying $M_k>N_k$
there exist $u_0\in L^\infty(\R^n)$, $u_0\ge 0$, and
an increasing sequence $t_k\nearrow T$ such that
\be{nonmonotBU}
\|u(\cdot,t_{2k})\|_\infty<N_k<M_k<\|u(\cdot,t_{2k-1})\|_\infty<M_k+1.
\ee
\end{theorem}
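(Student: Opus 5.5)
The construction places infinitely many spatially separated bumps that blow up successively in time. The oscillation comes from pieces of the solution that first rise and then decay, while the actual blow-up is driven by other pieces. For $p>p_S$ there is a positive radial steady state $\phi(x)=\phi_1(|x|)$ with $\phi(0)=1$ and $\phi\sim c|x|^{-2/(p-1)}$ as $|x|\to\infty$, and the scalings $\phi_\lambda(x)=\lambda^{2/(p-1)}\phi(\lambda x)$ are ordered, $\phi_\lambda<\phi_\mu$ for $\lambda<\mu$; this holds at least for $p\ge p_{JL}$. For $p_S<p<p_{JL}$ one would instead use the singular steady state $c|x|^{-2/(p-1)}$ together with the known threshold and stability properties. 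The basic building block is the dichotomy of Fila--Winkler--Yanagida type. Take initial data $\alpha\phi$ with a decaying tail, truncated to be compactly supported in a ball. If $\alpha<1$, or after truncation, the solution decays, and its sup norm eventually goes to $0$ with rate $t^{-1/(p-1)}$ or faster. If $\alpha>1$, it blows up in finite time. By continuous dependence in $\alpha$ on compact time intervals, for any target value $M$ and any time $\tau$ one can pick $\alpha$ and a scaling $\lambda$ so that the corresponding compactly supported solution $w$ reaches sup norm in $(M,M+1)$ on $[0,\tau]$ and afterwards decays below a prescribed small level. If needed, the threshold is crossed from below by continuity. The sequence $N_k$ is then chosen from the scale-invariant structure, e.g.\ $N_k$ larger than the sup norm that the pieces already constructed necessarily attain near the relevant times.

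Concretely, I would build $u_0=\sum_k w_{0,k}(\cdot-x_k)$ with centers $|x_k|\to\infty$ chosen very far apart. Each bump $w_{0,k}$ is a truncated, rescaled steady state with amplitude $\alpha_k$ slightly above or below the threshold. The bumps of odd index are subthreshold: each rises, reaching sup norm in $(M_k,M_k+1)$ at some time $t_{2k-1}$, and then decays. To guarantee finite-time blow-up, superimpose one supercritical piece whose blow-up time is $T$, and tune it so that its sup norm stays below $N_k$ on the time windows around $t_{2k}$. Type II (slow) behavior of solutions near the threshold makes this possible, because a solution just above threshold lingers near the steady state for a long time. Alternatively, and more cleanly, let the blow-up itself be carried by the sequence of bumps, with $t_k\nearrow T$ and $x_k\to\infty$. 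That would give $L^\infty$-blow-up without blow-up at any finite point, which is consistent with the statement. The constraint $U(t)\to\infty$ then only asks that $\limsup U=\infty$ be realized with $U(t)\to\infty$. Note that \eqref{nonmonotBU} with $N_k\to\infty$ is compatible with $U\to\infty$.

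The key estimates are inductive. Given bumps $1,\dots,k-1$ with controlled behavior on $[0,t_{2k-2}]$, place bump $k$ so far away that, by comparison with the linear heat semigroup plus the nonlinear interaction, it barely perturbs the earlier ones. Here I use the $L^\infty$ well-posedness and the fact that on bounded time intervals solutions depend continuously on data in $L^\infty$, with errors that are exponentially small in the separation distance thanks to Gaussian tails. The amplitude and scale of bump $k$ are tuned so that its peak value lies in $(M_k,M_k+1)$ at time $t_{2k-1}$ and it drops below $N_k$ by time $t_{2k}$. The total sum converges in $L^\infty$ because each bump has sup norm at most $1$ initially, after rescaling with small $\lambda_k$, and supports are disjoint. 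The main obstacle is controlling the interaction globally in time: the perturbation from infinitely many other bumps must not destroy the fine tuning, which is needed within an error less than $1$ at the peak of height $M_k\gg1$. I would handle this by choosing the separations, via a contraction or Cauchy-sequence argument on finite truncations $u_0^{(K)}$, so fast that the solutions $u^{(K)}$ converge uniformly on $[0,t_{2K}]$ with error sums below $\tfrac12\min(1,M_k-N_k)$. The strict inequalities leave the needed room, and passing to the limit gives \eqref{nonmonotBU}.
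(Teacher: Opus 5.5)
Your architecture (far-apart compactly supported bumps that each rise and then fall, with peak times accumulating at $T$ and blow-up carried collectively at space infinity) matches the paper's. What is missing is the mechanism that controls every bump at every valley time.

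The bound $\|u(t_{2k})\|_\infty<N_k$ requires \emph{all} bumps to be below $N_k$ at $t_{2k}$, including the infinitely many $j>k$ that have not yet peaked. Since $t_{2k}\to T$, bump $j$ must stay below $N_k$ at each earlier valley time, then reach $(M_j,M_j+1)$, then drop below $N_j$, all inside a window whose length tends to $0$. Tuning each bump separately through $(\alpha_k,\lambda_k)$ by continuous dependence fixes only its peak height and peak time. It says nothing about its values at the other $t_{2k}$.

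Choosing $N_k$ ``larger than what the pieces already constructed attain'' makes $N_k$ depend on the construction, hence on $\{M_j\}$. This reverses the quantifiers of the statement: $N_k$ must come first, and Corollary~\ref{corNonMon} uses $M_k$ defined from $N_k$.

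The steady-state dichotomy does not supply the building block either. By comparison, subthreshold members of your truncated family lie below the threshold solution. So they reach arbitrarily large heights only if that solution is unbounded. They can leave the level $N$ again within a short time only if it blows up \emph{incompletely}, i.e.\ continues past $T^*$ with finite sup norm. Neither property follows from the ordering of the $\phi_\lambda$, which in any case fails for $p_S<p<p_{JL}$.

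The paper's key idea is a single reference solution $u^*$ with incomplete blow-up at $T^*$: the self-similar peaking solution if $p<p_L$, the Matano--Merle threshold solution if $p\ge p_L$. All bumps are cut-off time shifts $w_k(\cdot,0)=u^*(\cdot,T^*-T_k)\varphi(\cdot/\rho_k)$. Comparison then gives $w_k(x,t)\le u^*(x,t+T^*-T_k)$ for all $k$ and $t$. The levels $N_k$ are read off the level sets of $u^*(0,\cdot)$, with summable window lengths $\Delta_k$ and $T_{k+1}=T_k+\Delta_k$, and only $\rho_k$ depends on $M_k$. This is exactly what yields \eqref{Ntildetk0}--\eqref{Ntildetk1} for all $j,k$ at once, and also the uniform bound $\|u_0\|_\infty\le A$. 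Your rescaling with small $\lambda_k$ to bound $u_0$ stretches time by $\lambda_k^{-2}$ and conflicts with the prescribed peak times.

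Second, your interaction control is on the wrong time interval. If the effect of adding bump $K$ is controlled only on $[0,t_{2K}]$, then bump $K$'s interaction with the others is never controlled at the later times $t_{2k-1},t_{2k}$, $k>K$. Those are exactly the times appearing in \eqref{nonmonotBU}. All finite truncations must be controlled uniformly on a fixed interval $[0,\tilde T]$ with $\tilde T>T$. This works only because each truncated bump is subthreshold and survives past $T$ (in the paper $w_k$ exists on $[0,T_k+\delta]\supset[0,\tilde T]$). It also rules out your variant with a superimposed piece blowing up exactly at $T$.

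With this correction, an inductive scheme is workable and is a reasonable alternative to the paper's simultaneous trapping region with the piecewise linear barrier $Y(x^1,t)$. Once $M_1,\dots,M_K$ are fixed, place bump $K$ so far out that Gaussian tails, with constants of order $e^{(M_K+1)^{p-1}\tilde T}$, make $(a+b)^p-a^p-b^p\le p(a+b)^{p-1}\min(a,b)$ tiny. Gronwall then gives $\|u^{(K)}-u^{(K-1)}-v_K\|_{L^\infty(\R^n\times[0,\tilde T])}\le\eps_K$ with $\sum_K\eps_K<1/2$. This does not repair the first gap, however, and that gap is the heart of the construction.
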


\medskip

Next recall that blow-up for problem \eqref{eqE1} with $f(u)=u^p$ is said to be type I if
\be{typeIbu}
\limsup_{t\to T} \ (T-t)^\beta\|u(\cdot,t)\|_\infty <\infty, \quad \beta:=\textstyle\frac{1}{p-1},
\ee
 and type II otherwise, 
and that blow-up is known to be type I whenever $p<p_S$
(see \cite[Section~23.2]{QS} and the references therein, and \cite{Q21,MTZ}).
It turns out that, for any $p>p_S$, the oscillatory solution in Theorem~\ref{thmNonMon} 
can be chosen in such a way that blow-up is of type II.

\begin{corollary}
\label{corNonMon}
Let $\Omega=\R^n$, $n\ge 3$, $f(u)=u^p$ with $p>p_S$, and let
$$\hbox{$\phi:(0,\infty)\to (0,\infty)$ be an increasing function with $\lim_{s\to\infty} s^{-1}\phi(s)=\infty$.}$$
Then there exist $u_0\in L^\infty(\R^n)$, $u_0\ge 0$ and a sequence $\tau_k\nearrow T=T(u_0)<\infty$, such that
the classical solution of \eqref{eqE1} satisfies 
$\|u(\cdot,\tau_k)\|_\infty\ge \phi\big((T-\tau_k)^{-\beta}\big)$.
In particular $u$ undergoes type II blow-up at $T<\infty$.
\end{corollary}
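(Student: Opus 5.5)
We derive the corollary from Theorem~\ref{thmNonMon} by choosing the free sequence $\{M_k\}$ so large that it outruns $\phi$ evaluated at the type~I scale. The key point is that in Theorem~\ref{thmNonMon} the sequence $N_k$ is fixed beforehand, and $M_k>N_k$ is arbitrary. The only obstacle is that $T$ and the times $t_{2k-1}$ depend on $u_0$, hence on $\{M_k\}$. So $M_k$ cannot be chosen directly in terms of $T-t_{2k-1}$. Instead we will bound $T-t_{2k-1}$ from below by a quantity depending only on $N_k$ (or on $M_{k-1}$), which is known before $M_k$ is chosen.

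The lower bound comes from the universal lower blow-up estimate. For $f(u)=u^p$, comparison with the ODE gives: if $\|u(\cdot,s)\|_\infty\le A$, then the solution exists and satisfies $\|u(\cdot,t)\|_\infty\le (A^{1-p}-(p-1)(t-s))^{-\beta}$. Consequently
\[
T-s\ \ge\ \frac{A^{1-p}}{p-1}.
\]
We apply this at $s=t_{2k}$ with $A=N_k$, using $\|u(\cdot,t_{2k})\|_\infty<N_k$ from \eqref{nonmonotBU}. This gives $T-t_{2k}\ge c_pN_k^{1-p}$.

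Unfortunately $t_{2k-1}$ precedes $t_{2k}$, so this bound does not control $T-t_{2k-1}$ directly. We therefore use the pair $(t_{2k-2},t_{2k-1})$ instead. Since $t_{2k-1}>t_{2k-2}$ and $\|u(\cdot,t_{2k-2})\|_\infty<N_{k-1}$, we get
\[
T-t_{2k-1}\ \ge\ ?
\]
which is the wrong direction: a later time is closer to $T$. We repair this by shifting indices. We use the peak $t_{2k+1}$, which comes after the trough $t_{2k}$, and which has $\|u(\cdot,t_{2k+1})\|_\infty>M_{k+1}$. Then $T-t_{2k+1}\le T-t_{2k}$, which is again the useless direction.

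The correct way is to bound $T-t_{2k+1}$ from below using the value at $t_{2k+1}$ itself. Since $\|u(\cdot,t_{2k+1})\|_\infty<M_{k+1}+1$, the ODE bound gives
\[
T-t_{2k+1}\ge c_p(M_{k+1}+1)^{1-p}.
\]
Hence
\[
(T-t_{2k+1})^{-\beta}\le c_p^{-\beta}(M_{k+1}+1)=:C(M_{k+1}+1).
\]
With $\tau_k:=t_{2k+1}$ we need
\[
M_{k+1}\ \ge\ \phi\big(C(M_{k+1}+1)\big).
\]
This is impossible when $\phi$ is superlinear. So the upper bound in \eqref{nonmonotBU} does not suffice, and the deficiency must be addressed in the construction itself.

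The genuine plan is therefore to revisit the construction behind Theorem~\ref{thmNonMon}. That construction presumably glues far-apart, well-separated bubbles, the $k$-th of which blows up, or nearly blows up, at its own time. We need to show that each peak can be placed at a time $\tau_k$ whose distance to the final time $T$ is controlled from below independently of $M_k$. This would follow if the final time $T$ and the peak times are fixed first, with the $k$-th peak's height then tuned by a localized perturbation that does not move $T$ much. Concretely, we would:
\begin{enumerate}
\item fix $T$ and $\tau_k\nearrow T$ in advance;
\item choose $M_k\ge \phi\big(2(T-\tau_k)^{-\beta}\big)$;
\item invoke the construction to produce a solution with $\|u(\cdot,\tau_k)\|_\infty>M_k$ and blow-up time within a factor controlled by $T$.
\end{enumerate}

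The main obstacle is exactly this stability of $T$ and of the peak times with respect to enlarging later $M_k$. We would try to obtain it from the fact that the modifications at stage $k$ are localized far away, in small regions, and act only after time $\tau_{k-1}$.

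Type~II blow-up then follows at once. Since $s^{-1}\phi(s)\to\infty$, we get $(T-\tau_k)^\beta\|u(\cdot,\tau_k)\|_\infty\to\infty$, which contradicts \eqref{typeIbu}.
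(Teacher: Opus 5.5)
There is a genuine gap. You set things up correctly, but then discard the right argument because of a direction error. The ``genuine plan'' you replace it with (fixing $T$ and the $\tau_k$ in advance, then proving stability of the construction) is never carried out, so as written the proposal does not prove the corollary.

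The error is the claim that the bound at $t_{2k}$ ``does not control $T-t_{2k-1}$ directly.'' Precisely because $t_{2k-1}$ precedes $t_{2k}$, you have $T-t_{2k-1}>T-t_{2k}$. So any lower bound on $T-t_{2k}$ is automatically a lower bound on $T-t_{2k-1}$. It is only the pair $(t_{2k-2},t_{2k-1})$ that goes the wrong way, since there the trough comes before the peak. You should use the trough that comes after the peak. Your ODE-comparison bound $T-s\ge A^{1-p}/(p-1)$ is correct; it is the lower blow-up rate estimate the paper invokes.

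With this correction, your first computation already finishes the proof, and it is exactly the paper's argument. Set $\kappa=(p-1)^{-\beta}$. Applying the lower bound at $t_{2k}$ with $\|u(\cdot,t_{2k})\|_\infty<N_k$ gives $(T-t_{2k})^{-\beta}<\kappa^{-1}N_k$, and hence $(T-t_{2k-1})^{-\beta}<\kappa^{-1}N_k$.

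Since $N_k$ is fixed in Theorem~\ref{thmNonMon} before $M_k$ is chosen, you may take $M_k:=N_k+\phi(\kappa^{-1}N_k)$. This sequence depends only on $N_k$ and $\phi$, is increasing, and exceeds $N_k$, because $\phi$ is increasing and positive. Then, with $\tau_k:=t_{2k-1}\nearrow T$, \eqref{nonmonotBU} and the monotonicity of $\phi$ give
\[
\|u(\cdot,\tau_k)\|_\infty>M_k\ge\phi(\kappa^{-1}N_k)\ge\phi\big((T-\tau_k)^{-\beta}\big).
\]
Your final paragraph on type~II then applies verbatim; the superlinearity of $\phi$ is needed only there. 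No modification of the construction is required.
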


Corollary~\ref{corNonMon} considerably increases the known range of powers for which type II blow-up may occur.
Moreover, whereas all the rates known so far are at most polynomial,
blow-up in Corollary~\ref{corNonMon} can be arbitrarily singular.
Namely, examples of type II blow-up for positive solutions (either in $\R^n$ or in a bounded domain) 
were so far known only for 
dimensions $n\ge 11$ with $p\ge p_{JL}$,
where 
\be{pJL}
   p_{JL}:=\begin{cases}
         \infty & \hbox{if }n\leq10, \\ 
         1+4\frac{n-4+2\sqrt{n-1}}{(n-2)(n-10)} & \hbox{if }n>10
        \end{cases} 
\ee 
is the Joseph-Lundgren exponent 
(see \cite{HV94a, HV94b, M04, M09, MM11, Co17, Se18, CMR20}), 
for $n\ge 7$ and $p=(n+1)/(n-3)$ (see \cite{DMW21}), or for $n\ge 5$ and $p=3$ (see \cite{DMWZ26}).
As for sign-changing solutions (with $f(u)=|u|^{p-1}u$), some examples are known
for $p=p_S$ with $3\le n\le 6$ (see \cite{schweyer_jfa12, dpmw_ams19, dpmwz_asnap20, dpmwzz20, Har1, Har2}).
We refer to the above works and to \cite{Mat07,M07,M11} for results on the type II blow-up rates, 
and to  \cite[Section 23.2]{QS} for an introduction to the topic of type~I/II blow-up.

As a key difference with the above works, our solutions undergo blow-up at space infinity, rather than at a finite point.
We refer to \cite{GU06,Shi08} for results on other topics related to blow-up at space infinity,
such as sufficient conditions on initial data and spatial bounds on solutions.
 On the other hand, for any $p>p_S$, 
the proof of Theorem~\ref{thmNonMon} also provides solutions with oscillatory blow-up of type I;
see Remark~\ref{remTypeI}.

\begin{remark} \label{remGlobal} \rm
For {\it global} classical solutions of \eqref{eqE1}, 
the lack of eventual (increasing or decreasing) monotonicity of $U(t)$ of certain solutions has been known before.
Namely, for $\Omega=\R^n$ and $f(u)=u^p$ with $p>p_S$ 
(see \cite{Q17} and cf.~also \cite{PY1, PY2, PY3}),
there exists $u_0\ge 0$ bounded, continuous and radially symmetric nonincreasing,
such that the solution of \eqref{eqE1} is global and satisfies
\be{supinf}
\liminf_{t\to\infty}\|u(t)\|_\infty=0 \qquad\hbox{and}\qquad \limsup_{t\to\infty} \|u(t)\|_\infty=\infty.
\ee
However the blow-up case is in a sense even more surprising, in view of the observation after \eqref{questionUmonot}, and the proof is more delicate than in the global existence case. 
\qed \end{remark}

\smallskip

We do not know presently if strongly nonmonotone behaviors similar to  Theorem~\ref{thmNonMon} or to \eqref{supinf} can occur in bounded domains.
However, more can be said on the related 
question:
\be{questionP}
\hbox{Does $U(t)$ become time nondecreasing whenever it is large enough ?}
\ee
We will see in the next section that this question is strongly connected with certain gradient estimates from the classical  paper \cite{FML}.
To give a more precise formulation of \eqref{questionP},
the question is whether, for given $A>0$, there exists a constant $K=K(A,f,\Omega)>0$ such that, for any $t_0\in(0,T)$,
$$\bigl(\|u_0\|_\infty\le A \mbox{ and } U(t_0)>K\bigr)
  \Longrightarrow \hbox{$U$ is nondecreasing on $[t_0,T)$.}\leqno(\hbox{P})$$
Theorem~\ref{thmNonMon} and Remark~\ref{remGlobal} imply in particular 
that property (P) may fail in $\R^n$ for both blow-up and global solutions.
Our next result shows that if may fail also in bounded domains for global solutions.

\begin{theorem} \label{thm1}
Let $n\ge 3$, $\psi\in L^\infty(\Omega)$, $\psi\ge 0$, $\psi\not\equiv 0$ and $f(u)=u^p$.
Assume either
$$\begin{cases}
&\hbox{$p>p_S$ and $\Omega$ convex and bounded; or}\\
\noalign{\vskip1mm}
&\hbox{$p=p_S$, $\Omega=B_R$, $R>0$ and $\psi$ radially symmetric nonincreasing in $|x|$.}
\end{cases}$$
There exists $\lambda^*>0$ with the following property.
For every $K>0$, there exist $\lambda\in(0,\lambda^*)$ and $t_0>0$
such that the solution of \eqref{eqE1} with initial data $u_0=\lambda\psi$ is global and satisfies
$$\|u_\lambda(t_0)\|_\infty>K\quad\hbox{and}\quad \lim_{t\to\infty}\|u_\lambda(t)\|_\infty=0.$$
In particular, $\|u_\lambda(t)\|_\infty$ is not nondecreasing on $[t_0,\infty)$.
\end{theorem}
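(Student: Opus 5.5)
Set $\lambda^*:=\sup\{\lambda>0:\ T(\lambda\psi)=\infty\}$. Standard facts give $0<\lambda^*<\infty$: small data yield global decaying solutions because $p>1$ and $\Omega$ is bounded, while large data blow up by Kaplan's eigenfunction argument. Write $u_\lambda$ for the solution with $u_0=\lambda\psi$. By comparison, $u_\lambda$ is increasing in $\lambda$.

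The plan rests on a dichotomy at the threshold. For $\lambda<\lambda^*$ the solution should be global and satisfy $\|u_\lambda(t)\|_\infty\to0$. For $\lambda=\lambda^*$ the threshold solution $u^*:=u_{\lambda^*}$ should be either unbounded (blowing up in finite time, or growing up as $t\to\infty$), or at least not small for all large times. Granting this, the argument goes as follows. Fix $K>0$.

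\emph{Case 1: $u^*$ blows up at some $T^*<\infty$.} Pick $t_0<T^*$ with $\|u^*(t_0)\|_\infty>K+1$. By continuous dependence on initial data on $[0,t_0]$, we get $\|u_\lambda(t_0)\|_\infty>K$ for all $\lambda<\lambda^*$ close to $\lambda^*$. Such $\lambda$ give global solutions that decay to $0$, which is the claim.

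\emph{Case 2: $u^*$ is global and $\sup_t\|u^*(t)\|_\infty=\infty$.} The same argument works, choosing $t_0$ with $\|u^*(t_0)\|_\infty>K+1$.

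\emph{Case 3: $u^*$ is global and bounded.} This case must be excluded. I would rule it out as follows. For $p>p_S$ on a bounded domain, I would use the energy $E$ together with the Lyapunov structure; for $p=p_S$ in the ball, I would use the Pohozaev identity. In both settings the only nonnegative stationary solution is $0$. Convexity of $\Omega$ is needed for Pohozaev when $p>p_S$, via the sign of $x\cdot\nu$; since $p\ge p_S$ forces nonexistence in star-shaped domains, the equilibrium $0$ is the only possible limit. A bounded global solution converges, along sequences $t_n\to\infty$, to the $\omega$-limit set consisting of equilibria, hence $u^*(t)\to0$. Since $0$ is asymptotically stable in $L^\infty$ (the linearization $-\Delta$ with Dirichlet conditions has positive first eigenvalue), the set of $u_0$ whose solutions decay to zero is open in $L^\infty$. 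Hence $u_{\lambda}$ would also decay to $0$ for $\lambda$ slightly above $\lambda^*$, contradicting the definition of $\lambda^*$.

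The main obstacle is justifying the bounded-global case in Case 3. The $\omega$-limit argument needs a priori compactness in time, which follows from boundedness by parabolic regularity, together with the energy identity to ensure the limits are equilibria. One must also check that decaying data form an open set, and that $\lambda<\lambda^*$ implies global existence with decay, i.e.\ that $\{\lambda:u_\lambda \text{ global}\}$ is an interval. The interval property follows from monotonicity in $\lambda$. Decay for $\lambda<\lambda^*$ uses comparison $u_\lambda\le u_{\lambda'}$ with $\lambda<\lambda'<\lambda^*$. For the decay itself, I would instead use the classical result that for $\lambda<\lambda^*$ the solution is global with $\|u_\lambda(t)\|_\infty\to0$. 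This follows from concavity/strict monotonicity arguments: $u_{\lambda'}$ global and the map $\lambda\mapsto u_\lambda$ convexity-type inequality $u_\lambda\le(\lambda/\lambda')u_{\lambda'}$ fails in general, so instead I would use that a global solution for $u_{\lambda'}$ gives a bounded energy. The energy then forces $E(u_\lambda(t))\ge0$, and global solutions with nonnegative energy for all time and no positive equilibria converge to $0$ by the same $\omega$-limit argument. For the radial critical case, boundedness of global solutions uses the known a priori bounds for radial nonincreasing solutions in the ball. If only unboundedness of $u^*$ (grow-up) is available rather than blow-up, Case 2 suffices.
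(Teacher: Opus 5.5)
Your overall structure is the paper's: define $\lambda^*$, show that $u_{\lambda^*}$ is unbounded, and transfer $\|u_{\lambda^*}(t_0)\|_\infty>K+1$ to some $\lambda<\lambda^*$ by continuous dependence on $[0,t_0]$. Your treatment of the threshold is sound, and more self-contained than the paper, which quotes \cite[Section~28.4]{QS} and \cite[Theorem~28.7*(ii)]{QS}. You split into blow-up, global unbounded, and global bounded, and exclude the last case using the Lyapunov functional, Pohozaev nonexistence of positive steady states for $p\ge p_S$ in star-shaped domains, and openness of the basin of attraction of $0$. That is a valid argument.

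The gap is the other ingredient: $\|u_\lambda(t)\|_\infty\to0$ for every $\lambda<\lambda^*$. Your $\omega$-limit argument applied to $u_\lambda$ requires $u_\lambda$ to be bounded. For $p=p_S$ you take this from ``known a priori bounds for radial nonincreasing global solutions in the ball,'' but no such bound exists. Indeed, $u_{\lambda^*}$ is itself a radially nonincreasing, global, unbounded solution: your Case~2 is exactly what happens at $p=p_S$. The same solution refutes your claim that global solutions with nonnegative energy and no positive equilibria converge to $0$.

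Comparison does not rescue the step. The inequality $u_\lambda\le(\lambda/\lambda')u_{\lambda'}$ does hold, contrary to what you wrote, because $w=(\lambda/\lambda')u_{\lambda'}$ satisfies $w_t-\Delta w=(\lambda'/\lambda)^{p-1}w^p\ge w^p$. But with $u_{\lambda'}$ possibly unbounded, it gives neither boundedness nor decay of $u_\lambda$. What is actually needed is the statement that a solution with data $\theta u_0$, $\theta<1$, lying below a global solution with data $u_0$, decays to $0$. This does not follow from a priori bounds; the paper takes it from \cite[Theorem~1]{QS25a}.

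In the supercritical convex case your route can be completed in either of two ways. You can invoke the known boundedness of global solutions for $p>p_S$ in bounded convex domains, after which your $\omega$-limit argument applies. Or you can cite \cite[Theorem~22.4*]{QS}, as the paper does. As written, however, your proposal supplies neither.
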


\medskip

The negative answer to \eqref{questionUmonot} and the failure of (P) 
guaranteed by Theorems~\ref{thmNonMon}-\ref{thm1}
involve the nonlinearity $f(u)=u^p$ with $p\ge p_S$. The following theorem 
shows that the assumption $p\ge p_S$ is actually necessary. 

\begin{theorem} \label{thm1a}
Let $f(u)=u^p$ with $p\in(1,p_S)$. 
Then property (P) is true.
\end{theorem}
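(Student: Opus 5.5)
We argue by contradiction and use a rescaling (blow-up) argument, relying on the Liouville theorem for the nonlinear heat equation in the subcritical range together with universal a priori bounds. The key idea is this: if (P) fails, there are solutions $u_j$ with $\|u_{0,j}\|_\infty\le A$, times $t_{0,j}$ with $U_j(t_{0,j})=:M_j\to\infty$, and later times $s_j>t_j\ge t_{0,j}$ at which $U_j$ decreases. We then rescale around a point where $U_j$ is close to its running maximum and pass to a limit. The limit is an entire (or half-space) solution of $u_t=\Delta u+u^p$ that is bounded by its value at the origin for all past times, and we aim to show that such a solution must be nonincreasing somewhere, which contradicts the Liouville theorem.

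\emph{Step 1: reduction to times where $U$ is maximal in the past.} Fix $A$ and suppose $U(t)\ge K$ and $U$ is not nondecreasing on $[t_0,T)$. Then there exist $t_0\le t_1<t_2<T$ with $U(t_2)<U(t_1)$. Replacing $t_1$ by a maximum point of $U$ on $[t_0,t_2]$, we may assume $U(t_1)=\max_{[t_0,t_2]}U$. Also, since $U(0)\le A$ and $U(t_0)>K\gg A$, we may move $t_1$ so that $U(t_1)=\max_{[0,t_1]}U=:M$, still with $U$ dropping strictly below $M$ at some later time $t_2$. Since the ODE comparison gives $U(t)\le (U(\tau)^{1-p}-(p-1)(t-\tau))^{-\beta}$, the time interval $t_1-\tau$ over which $U$ climbs from $M/2$ to $M$ is at least $cM^{-(p-1)}$. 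This keeps the backward time window nondegenerate at the natural scale $\lambda=M^{-(p-1)/2}$.

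\emph{Step 2: rescaling and the limit.} Pick $x_1$ with $u(x_1,t_1)\ge M(1-1/j)$ and set $v(y,s)=\lambda^{2/(p-1)}u(x_1+\lambda y,t_1+\lambda^2 s)$. We take $s\in[-\sigma_j,\, S_j]$, where $\sigma_j\to\infty$ (possible since $t_1\ge$ const while $\lambda\to 0$; the past bound $U\le M$ holds on all of $[0,t_1]$). On $s\le 0$ we have $0\le v\le 1$ and $v(0,0)\ge 1-1/j$, so parabolic estimates give convergence to an ancient solution $V$ of $V_t=\Delta V+V^p$ on $\R^n\times(-\infty,0]$ (or on a half-space with Dirichlet condition, if $x_1$ approaches $\partial\Omega$ at scale $\lambda$; uniform smoothness of $\Omega$ handles this), with $0\le V\le 1$ and $V(0,0)=1$. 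We need to extend the convergence a bit forward, to $s\in[0,\delta]$, which is fine by the local existence bound. Using the universal estimates for $p<p_S$ (Pol\'a\v{c}ik--Quittner--Souplet), the rescaled solutions are also bounded on $[0,S]$ for any fixed $S$, which gives a limit on $(-\infty,S]$.

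\emph{Step 3: contradiction, the main obstacle.} By the Liouville theorem for $p<p_S$ (ancient bounded solutions on $\R^n\times(-\infty,0]$, and the half-space analogue), the only such ancient bounded solution with $V(0,0)=1$ would be zero, or, in the case of entire-in-time nonnegative solutions, $V\equiv 0$. Since our limit is only ancient rather than entire, the correct tool is the Liouville theorem for ancient solutions bounded above. Using the universal bound $u\le C(p,n)(t-\tau)^{-\beta}$ type estimates for ancient solutions, together with the Merle--Zaag result, $V$ must be spatially constant, $V=(\beta/(T^*-s))^{\beta}$ with $V(0,0)=1$. This $V$ is strictly increasing, with $V(0,s)\ge 1+cs$ for $s\in[0,\delta]$. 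By $C^1$ convergence this yields $U(t)>M$ for $t\in(t_1,t_1+\delta\lambda^2]$.

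The delicate point is turning this local increase into monotonicity on all of $[t_1,T)$. We plan to argue with a first time $t_2$ at which $U$ returns to the value $U(t_1)$ from above after exceeding it, combined with choosing $t_1$ as the last time before a drop. Concretely, let $t_2$ be any time where $U'$ (in the Dini sense) is negative, and let $t_1:=t_2$ itself, provided $U(t_2)=\max_{[0,t_2]}U$. If $U(t_2)$ is not the past maximum, we use the previous maximal time instead. The rescaled limit at a drop time that is a past maximum is a nonconstant ancient solution, contradicting Merle--Zaag. Handling the case where the drop happens below the running maximum is the part I expect to need the most care, likely via a continuity/first-time argument with the quantity $\max_{[0,t]}U$.
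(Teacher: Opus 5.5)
Your framework is the same as the paper's: argue by contradiction, rescale at a nearly maximal point whose value dominates the whole past, and use the universal decay bound together with the Merle--Zaag Liouville theorem to identify the ancient limit as the spatially homogeneous solution with $V_s(0,0)=V^p(0,0)=1$. But the proposal never reaches a contradiction, as you acknowledge at the end.

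The claim that $C^1$ convergence gives $U(t)>M$ for all $t\in(t_1,t_1+\delta\lambda^2]$ is not justified. You only know $u(x_1,t_1)\ge(1-1/j)M$, so $\partial_s v_j(0,\cdot)\approx 1$ yields $U>M$ only for $t\ge t_1+\eta\lambda^2$, with $\eta>0$ fixed and $j$ large. Meanwhile $U$ may drop below $M$ within a time $o(\lambda^2)$ after $t_1$, during which $u(x_1,\cdot)$ increases by less than the gap $M-u(x_1,t_1)$.

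Your fallback plan has the same defect. The sign of a Dini derivative of $U$ at a single time is not inherited by the rescaled limit. At any time where $U$ is a past maximum, that limit is again the increasing ODE solution, so there is no reason for it to be ``nonconstant''. Also, nothing needs to be proved on all of $[t_1,T)$: a single drop has to be contradicted.

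The missing idea is to transfer the drop of $U$ to a fixed spatial point and pass the sign of $\partial_t u$ there to the limit. Let $\tilde t$ maximize $U$ on $[0,t_2]$, so $\tilde t\in(0,t_2)$ once $U(t_1)>A$. Pick $x$ with $u(x,\tilde t)>\max\bigl(U(\tilde t)-1,U(t_2)\bigr)$, so that $u(x,t_2)<u(x,\tilde t)$. Set $\tau:=\inf\{s\in[\tilde t,t_2]:\ \partial_tu(x,s)\le 0\}$. Then $\partial_tu(x,\tau)\le0$, and since $u(x,\cdot)$ is increasing on $[\tilde t,\tau]$, we get $u(x,\tau)\ge u(x,\tilde t)>\sup_{[0,\tau]}U-1$. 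Rescaling at $(x,\tau)$ by $M=U(\tau)$ gives a limit with $0\le V\le 1$ for $s\le 0$, $V(0,0)=1$ and $V_s(0,0)\le0$. This contradicts $V_s(0,0)=1$; it is the paper's argument.

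Alternatively, within your own setup, take $t_1$ maximizing $U$ on all of $[0,t_2]$ and also require $u(x_1,t_1)>U(t_2)$. Forward $C^1$ convergence then makes $u(x_1,\cdot)$ increasing on $[t_1,t_1+\delta\lambda^2]$, which excludes $t_2\le t_1+\delta\lambda^2$. Otherwise $U(t_1+\delta\lambda^2)>M$ contradicts the maximality of $t_1$.

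Some side claims also need correcting:
the rescaled solutions are not bounded on $[0,S]$ for every $S$, since the limit itself blows up at $s=\beta$;
your first formulation of the Liouville step, that bounded ancient solutions with $V(0,0)=1$ vanish, is false;
the half-space limit must be excluded explicitly, e.g.\ by odd reflection as in the paper.
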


Let us emphasize that Theorem~\ref{thm1a} is true for any
smooth domain $\Omega$ (bounded or unbounded)
and $T\le\infty$.
In the special case when $\Omega$ is bounded and convex, or $\Omega=\R^n$ and $u_0\in H^1(\R^n)$,
a positive answer to \eqref{questionUmonot} follows also from \cite[Theorem~1.7]{MZ}
(cf.~also \cite[Theorem~3 and Corollary~2(ii)]{MZ00}).
A more general and stronger version of Theorem~\ref{thm1a} will be given in 
Theorem~\ref{thm1aN} below.

\subsection{Discussion and remarks.}

\begin{remark} \label{rem1} \rm 
(i) {\bf (Positive results for blow-up solutions)} 
The answer to question  \eqref{questionUmonot}  is positive in the following situations:

\smallskip

\begin{itemize}
\item[(a)] $u_0\in C^2(\Omega)\cap BC(\overline\Omega)$, $\Delta u_0+f(u_0)\ge 0$ 
 and $u_0=0$ on $\partial\Omega$.
 Indeed this ensures $u_t\ge 0$ in 
$\Omega\times(0,T)$ by the maximum principle
(see, e.g.,~\cite[Proposition 52.19]{QS});

\smallskip

\item[(b)] $f(u)=u^p$ with $p\in(1,p_S)$ (Theorem~\ref{thm1a}) 
or, more generally, $\lim_{s\to\infty} s^{-p} f(s)=\ell>0$ and $p\in(1,p_S)$ (Theorem~\ref{thm1aN});
\smallskip

\end{itemize}

\noindent as well as in some radial cases, like:
\begin{itemize}
\item[(c)] $\Omega=B_R$, $u_0$ is radially symmetric and nonincreasing\footnote{or more 
generally $u_0$ is radially symmetric and takes its maximum at $x=0$ for $t$ close to $T$} in $r=|x|$
(see \cite{GP86,NS85} and cf.~also \cite[Lemma~23.12]{QS});

\smallskip

\item[(d)] $\Omega=B_R$, $n=2$, $u_0$ is radially symmetric, $f(u)=e^u$
(see \cite{GL}).
\end{itemize}
	
\medskip

(ii) {\bf (Positive results for global solutions)} 
Assume $\Omega$ is bounded. For a large class of superlinear $f$ with subcritical growth (see \cite{Q03}), 
global solutions satisfy a priori estimate of the form 
\be{ae}
\|u(t)\|_\infty\leq C=C(\|u_0\|_\infty,f,\Omega),\qquad t\geq0.
\ee
Hence if we choose $K:=C(A,f,\Omega)$ in (P), then (P) is trivially true. 
In the case $n=1$, this remains true for all $f\in C^1$ which are superlinear in the following sense:
$$\hbox{$F(u):=\int_0^uf(s)\,ds\ge C_1u^\mu-C_2$ and $f(u)u\ge \mu F(u)-C_3$,}$$ 
where $\mu>2$ and $C_1,C_2,C_3>0$
(see \cite[Theorem 6.1]{Q03}).
If $\Omega=\R^n$, and $f$ is a perturbation of the nonlinearity $u^p$ with $1<p<p_S$, then  
(P) again remains true for global solutions due to the a priori estimates of the form \eqref{ae}, see \cite{Q21,QS25}. 
On the other hand, if $n=2$, then the existence of (superlinear) nonlinearities $f$ such that property (P) fails
seems to be open.
\qed \end{remark}

\begin{remark} \label{rem2} \rm 
{\bf (Weaker properties for global solutions)} 
(i) When (P) fails for global solutions, one can consider also the weaker version (Pw) of property (P) 
for $T=\infty$ where 
$K=K(u_0,f,\Omega)$ (i.e. $K$ might depend on $u_0$ and not just on $\|u_0\|_\infty$).
Property (Pw) fails as well when $\Omega=\R^n$ in view of Remark~\ref{remGlobal}. 
On the other hand, (Pw) is trivially true whenever boundedness of global solutions holds,
which is true for instance when $\Omega$ is bounded and convex and $f(u)=u^p$ with $p>p_S$
(see \cite{CDZ07, BS15, So17}).
The latter as well as (Pw) are open questions when $\Omega$ is not convex.

\smallskip

(ii) One could attempt at weakening (P) by restricting $u_0$ 
to (the positive cone of) a linear subspace $X$ of $L^\infty(\Omega)$ and replacing $\|u_0\|_\infty$
with $\|u_0\|_X$.
However, Theorem~\ref{thm1} shows that even this weaker version fails.
\qed \end{remark}

\begin{remark} \label{rem4} \rm 
{\bf (Monotone quantities)}
Assume $f(u)=u^p$ with $p>1$.
Unlike $U(t)$, some other quantities related to the solution $u$ always become time increasing whenever they are large enough.
This is the case for
$\phi(t)=\int_\Omega u(t)\varphi \,dx$
where $\varphi$ is, respectively, the first Dirichlet eigenfunction of the Laplacian if $\Omega$ is bounded,
and any Gaussian $e^{-k|x-x_0|^2}$ ($k>0$, $x_0\in\R^n$) if $\Omega=\R^n$.
This is also the case for $\psi(t)=\|u(t)\|_2$ if $\Omega$ is bounded,
and these facts remain valid for more general nonlinearities;
see, e.g.,~\cite[(17.3) and 17.9)]{QS}.
However these quantities are much weaker than $U(t)$ and need not blow up as $t\to T$ in general
(see, e.g.,~\cite[Remark~17.7(i)]{QS}).
\qed \end{remark}

\begin{remark} \label{rem3} \rm 
{\bf (Failure of property (P))} 
(i) Since the proof of Theorem~\ref{thmNonMon} is quite involved, 
it is worth noting the following easier argument for the failure of the (stronger) property (P) 
when $\Omega=\R^n$, $f(u)=u^p$, $p>p_S$, and $T<\infty$.
Consider
$u_0$ yielding the behavior in \eqref{supinf}. 
Set $A:=\|u_0\|_\infty+1$ and choose $K>0$.
Then there exist $t_2>t_1>0$ such that $\|u(\cdot,t_1)\|_\infty>K>\|u(\cdot,t_2)\|_\infty$.
The continuous dependence on initial data in $L^\infty$ shows the existence of $\eps\in(0,1)$ such that the solution $\tilde u$ 
with initial data $\tilde u_0:=u_0+\eps$ stays bounded on $[0,t_2]$ and
$\|\tilde u(\cdot,t_1)\|_\infty>K>\|\tilde u(\cdot,t_2)\|_\infty$.
In addition, since $\tilde u_0\ge\eps$, $\tilde u$ has to blow up at some $T\in(t_2,\infty)$.

\smallskip
(ii)
Let $\Omega$ and $f$ be as in (i).
The maximal existence time $T(\tilde u_0)$ of the solution $\tilde u$ in~(i)
tends to infinity as $K\to\infty$ (since $T(\tilde u_0)>t_1$ and $t_1=t_1(K)\to\infty$).
The following argument shows the failure of (P) even if we consider only solutions with $T\le C<\infty$.
\cite[Theorem~1.1]{MV07} guarantees the existence of $0<T_1<T_2<\infty$ and $u_0\in L^\infty(\R^n)$, $u_0\ge0$,
such that the proper solution $u$ with initial data $u_0$ (i.e.~the limit of classical solutions $u_k$
with $f(u)$ replaced by $f_k(u):=\min(u^p,k^p)$) is a classical solution in $(0,T_2)\setminus\{T_1\}$,
it blows up incompletely at $T_1$ and completely at $T_2$.
Fix $\eps>0$, $t_0\in(T_1,T_2)$ and choose $K>\|u(\cdot,t_0)\|_\infty$.
It is known that if $\lambda\in(0,1)$, then the solution $u_\lambda$ with initial data $\lambda u_0$
satisfies $T(\lambda u_0)\ge T_2$.
In addition, if $\lambda$ is close to $1$, then $\|u_\lambda(\cdot,T_1)\|_\infty>K>\|u_\lambda(\cdot,t_0)\|_\infty$
and \cite[Theorem~3.1]{MV07} guarantees $T(\lambda u_0)<T_2+\eps$.
Consequently, (P) fails with $T<T_2+\eps$.  

\smallskip

(iii) Let $f(u)=u^{p_S}$. If $n\ne4$, then \cite{AdP25} and \cite{CdPM20} guarantee the existence
of a large class of bounded (non-radial) domains and initial data $\psi$,
such that the corresponding solution is global and unbounded.
The proof of Theorem~\ref{thm1} and \cite[Theorem 1]{QS25a}
then imply that the conclusion in~Theorem~\ref{thm1} is true with $\lambda^*=1$.
\qed \end{remark}

\subsection{Sketch of proof of Theorem~\ref{thmNonMon}.}
 The basic idea is to construct the initial 
data $u_0\in L^\infty(\R^n)$ as the sum of a sequence of functions $v_{0,k}$
obtained by 
choosing a suitable
solution $u^*$ with {\it incomplete} $L^\infty$ blow-up,
suitable $x_k\in\R^n$ and $\tau_k>0$, and multiplying the functions
$u^*(\cdot-x_k,\tau_k)$
by appropriate compactly supported cut-offs. 
The outline (in very simplified form) is as follows.

\vskip 1pt

$\bullet${\hskip 1mm}
For each of the solutions $v_k$ with initial data $v_{0,k}$, 
its norm $\|v_k(\cdot,t)\|_\infty$ has a maximum at a time $t=T_k$ and then 
drops to lower values for $t>T_k$.
The parameters of the above 
 transformations are adjusted in such a way that the sequence $T_k$ increases to a finite limit $T$,
while $\|v_k(\cdot,T_k)\|_\infty$ increases monotonically to~$\infty$, and $t\mapsto \sup_k\|v_k(\cdot,t)\|_\infty$
then describes a curve with oscillatory blow-up as $t\to T$.

\vskip 1pt

$\bullet${\hskip 1mm}From this, the goal is to ensure that the solution $u$ arising from $u_0$ inherits the required oscillation properties,
by showing that the $u-v_k$ remain small on suitable sets~$D_k$
(these sets contain the supports of the $v_{0,k}$ and form a partition of $\R^n$).

\vskip 1pt

$\bullet${\hskip 1mm}To achieve this, one chooses the space shifts in such a way that the centers of $D_k$ move rapidly enough to infinity as $k\to\infty$ (which will in particular produce blow-up at infinity),
and one then shows that the functions $u-v_k$ cannot escape a carefully designed trapping region.
This requires a delicate splitting of the space-time integrals
in the Duhamel representation formula for $u-v_k$, 
making use of the spatial decay properties of $u^*$ and of the Gaussian decay of the heat kernel,
with special care needed to estimate the critical mutual contributions of the adjacent regions $D_k$ and $D_{k+1}$.

\section{Relations between time monotonicity and gradient estimates}

We now introduce the notation
$$M(t_0)=\sup_{t\in[0,t_0]} U(t),\quad F(z)=\int_0^z f(s)ds$$
and the (positive cone of the) space 
$$Y:=\{v\in C^1(\overline\Omega)\,:\,
\hbox{$v\geq 0$ and $v=0$ on $\partial\Omega$}\},\quad \|v\|_Y=\|v\|_\infty+\|\nabla v\|_\infty.$$
We have the following gradient estimate for solutions of \eqref{eqE1}.

\begin{proposition} 
 \label{thm2}
Assume $\Omega$ convex, \eqref{hypf} is satisfied and
$u_0\in Y$.
Let $t_0\in(0,T)$.
If
\be{hypFML}
\frac12|\nabla u_0|^2+F(u_0)\le F(M(t_0))\quad\hbox{ in $\overline\Omega$,}
\ee
then
\be{conclFML}
\frac12 |\nabla u|^2+F(u)\le F(M(t_0))\quad\hbox{ in $\overline\Omega\times[0,t_0]$.}
\ee
\end{proposition}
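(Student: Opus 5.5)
The plan is to follow the classical Friedman--McLeod approach and apply a maximum principle to the auxiliary function
$$J(x,t):=\tfrac12|\nabla u|^2+F(u)-F(M(t_0))\quad\hbox{on }\overline\Omega\times[0,t_0].$$
We must show $J\le 0$. The proof has three steps: derive a parabolic inequality for $J$, check the sign of $J$ on the parabolic boundary, and close with a maximum principle that accounts for the unknown sign of the zero-order coefficient.

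\emph{Step 1 (interior inequality).} Set $w=|\nabla u|^2/2$. A direct computation, using $u_t-\Delta u=f(u)$, gives
$$w_t-\Delta w=f'(u)|\nabla u|^2-|D^2u|^2 .$$
For $F(u)$ we get
$$(\partial_t-\Delta)F(u)=f(u)(u_t-\Delta u)-f'(u)|\nabla u|^2=f(u)^2-f'(u)|\nabla u|^2 .$$
Adding,
$$J_t-\Delta J=f(u)^2-|D^2u|^2 .$$
This has no sign by itself, so we use the standard trick of writing $\nabla J=(D^2u+f(u)I)\nabla u$, i.e. $D^2u\,\nabla u=\nabla J-f\nabla u$. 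Wherever $\nabla u\ne0$, Cauchy--Schwarz gives
$$|D^2u|^2\ge \frac{|D^2u\nabla u|^2}{|\nabla u|^2}=\frac{|\nabla J-f\nabla u|^2}{|\nabla u|^2}.$$
Expanding the square yields
$$J_t-\Delta J\le \frac{2f\,\nabla J\cdot\nabla u}{|\nabla u|^2}-\frac{|\nabla J|^2}{|\nabla u|^2}\le b\cdot\nabla J,\qquad b:=\frac{2f\nabla u}{|\nabla u|^2}.$$
The drift $b$ is singular where $\nabla u=0$. At an interior positive maximum of $J$ one has $\nabla J=0$, so $D^2u\nabla u=-f\nabla u$, and the inequality still holds if $\nabla u\ne0$. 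If instead $\nabla u=0$ there, then $J=F(u)-F(M(t_0))\le0$ automatically, since $u\le M(t_0)$ on $[0,t_0]$ by definition of $M$ and $F$ is increasing. This observation is how I would avoid the singularity of $b$.

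\emph{Step 2 (parabolic boundary).} At $t=0$ we have $J\le0$ by \eqref{hypFML}. On $\partial\Omega$ we have $u=0$, so $F(u)=0$, and $J=\frac12|\partial_\nu u|^2-F(M(t_0))$. Boundary control is the main obstacle, and this is where convexity of $\Omega$ is used. Here I would argue that a positive maximum of $J$ cannot be attained on the boundary. At a boundary point, $u=0$ and $\Delta u=u_t-f(0)=-f(0)$, so $u_{\nu\nu}=-f(0)+(n-1)H\,u_\nu$, where $H\ge0$ is the mean curvature. With $u_\nu\le0$ for the outer normal (since $u\ge0$), this gives
$$\partial_\nu J=u_\nu u_{\nu\nu}+f(0)u_\nu=(n-1)H u_\nu^2\ge0 .$$
The sign goes the wrong way for a direct Hopf argument, so the plan is to instead use the classical fact that for convex domains and $u=0$ on $\partial\Omega$ one has $\partial_\nu(|\nabla u|^2)\le0$. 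Equivalently, $\partial_\nu J\le0$ once $u_{\nu\nu}$ is computed correctly via $\Delta u=u_{\nu\nu}+(n-1)Hu_\nu$, keeping track of sign conventions. With this sign, Hopf's lemma applied to the inequality of Step 1 rules out a strict boundary maximum of $J$ when $J$ is not constant. I expect this sign bookkeeping, together with the regularity of $u$ up to the boundary needed to apply Hopf's lemma, to be the delicate point.

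\emph{Step 3 (conclusion).} Suppose $\max_{\overline\Omega\times[0,t_0]}J>0$. For unbounded $\Omega$ one first perturbs, for instance by considering $J-\eps(|x|^2+1)e^{Ct}$, or uses that $u$ and $\nabla u$ are bounded so that a maximum is approached. Then Steps 1 and 2 together exclude a first time at which $J$ reaches a positive maximum: an interior maximum is excluded by the inequality $J_t-\Delta J\le b\cdot\nabla J$ (or by $J\le0$ where $\nabla u=0$), and a boundary maximum is excluded by Hopf's lemma. Hence $J\le0$ on $\overline\Omega\times[0,t_0]$, which is \eqref{conclFML}.
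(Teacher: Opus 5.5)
Your architecture is the paper's: the function $J=\frac12|\nabla u|^2+F(u)-F(M(t_0))$, the inequality $J_t-\Delta J\le b\cdot\nabla J$ on $\{\nabla u\ne0\}$ (which you derive correctly), the boundary inequality $\partial_\nu J\le0$ from convexity, and a perturbed maximum principle. Two points need fixing.

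In Step~2 your curvature term has the wrong sign. On $\partial\Omega$ one has $\Delta u=u_{\nu\nu}+(n-1)Hu_\nu$ with $H\ge0$ for convex $\Omega$, so $u_{\nu\nu}=-f(0)-(n-1)Hu_\nu$. Hence $\partial_\nu J=u_\nu u_{\nu\nu}+f(0)u_\nu=-(n-1)Hu_\nu^2\le0$; you can check this on a ball with radial $u$. Once corrected, this is exactly the boundary inequality the paper uses. As written, your Step~2 asserts both signs.

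The real gap is in Step~3, where the singular drift is not handled.

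\begin{itemize}
\item At an interior maximum of $J$, your inequality only gives $0\le J_t-\Delta J\le b\cdot\nabla J=0$. That is not a contradiction, so you must pass to a strict subsolution $J-\eps\phi$.
\item At a maximum of $J-\eps\phi$, however, $\nabla J=\eps\nabla\phi\ne0$. The term $\eps\,b\cdot\nabla\phi$, with $|b|=2f(u)/|\nabla u|$, must then be absorbed by $\eps(\phi_t-\Delta\phi)$.
\item Your observation that $\nabla u\ne0$ where $J>0$ is only qualitative. Positivity of $J-\eps\phi$ only yields $|\nabla u|^2>2\eps\phi$, so the best bound is $\eps|b|\,|\nabla\phi|<\sqrt{2\eps}\,f(u)|\nabla\phi|/\sqrt{\phi}$. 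For small $\eps$ this is not dominated by $\eps(\phi_t-\Delta\phi)$, so a fixed perturbation such as $(|x|^2+1)e^{Ct}$ does not close the argument.
\end{itemize}

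The missing idea is a fixed margin.

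\begin{itemize}
\item Assuming the conclusion fails, choose $\eta>0$ such that $\tilde J:=J-\eta>0$ somewhere.
\item Since $u\le M(t_0)$ on $[0,t_0]$ and $F$ is increasing, $\tilde J>0$ forces $|\nabla u|^2>2\eta$.
\item Hence on $\{\tilde J>0\}$ the drift is bounded by $C=2\eta^{-1/2}\sup f(u)$, independently of $\eps$, and $\tilde J_t-\Delta\tilde J-C|\nabla\tilde J|\le0$ there.
\item Take $\phi=(n+1+C)t+(1+|x|^2)^{1/2}$. Then $\phi_t-\Delta\phi-C|\nabla\phi|\ge1$, and $\partial_\nu\phi\ge0$ by convexity (with $0\in\Omega$).
\item Therefore $\psi=\tilde J-\eps\phi$ is a strict subsolution on $\{\psi>0\}$. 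It is negative at $t=0$ and for large $|x|$, so it attains a positive maximum.
\item That maximum cannot be interior. At a boundary point, Hopf's lemma gives $\partial_\nu\psi>0$, contradicting $\partial_\nu\psi\le\partial_\nu\tilde J\le0$.
\end{itemize}

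This is exactly how the paper closes the argument. It also handles unbounded $\Omega$, which your remark about approximate maxima does not.
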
 

\goodbreak

Proposition~\ref{thm2} follows from arguments in \cite{Sp};
see also \cite{FML}  (whose authors seemed unaware of \cite{Sp})
and the proof of \cite[Proposition 24.4a]{QS}.
Since these works assume $\Omega$ bounded,
we will give a proof below with the necessary changes to cover also the case $\Omega$ unbounded. 

On the other hand, it was claimed in \cite[Theorem~3.1]{FML} 
that Proposition~\ref{thm2}
would hold with $U(t_0)$ instead of $M(t_0)$ in \eqref{hypFML},~\eqref{conclFML} i.e., that
\be{hypFML0}
\frac12|\nabla u_0|^2+F(u_0)\le F(U(t_0))\quad\hbox{ in $\overline\Omega$}
\ee
would imply 
\be{conclFML4a}
\frac12 |\nabla u|^2+F(u)\le F(U(t_0))\quad\hbox{ in $\overline\Omega\times[0,t_0]$.}
\ee
However, it was observed (see 
\cite[Proposition~24.4a]{QS} and the subsequent remark)
that the proof of \cite[Theorem~3.1]{FML} 
contains a gap (see Remark~\ref{gapFML} for more details).

Still, it remained somewhat unclear whether the original statement of \cite[Theorem~3.1]{FML} 
might nevertheless be true.
Theorem~\ref{thm1} combined with the following simple proposition 
entail that this is not the case: $M(t_0)=U(t_0)$ is a necessary (and sufficient) condition for 
\eqref{conclFML4a} to hold,
whereas Theorem~\ref{thm1} shows that the two quantities may differ 
while assumption \eqref{hypFML0} is satisfied.

\begin{proposition} \label{prop3}
Assume \eqref{hypf} and let $0\le t_1<t_0<T$.

\smallskip

(i)  If 
\be{hypFML3}
\frac12|\nabla u|^2+F(u)\le F(U(t_0))\quad\hbox{ in $\overline\Omega\times\{t_1\}$,}
\ee
then $U(t_1)\le U(t_0)$.
\smallskip

(ii) Assume $\Omega$ convex, $u_0\in Y$ and \eqref{hypFML0}.
Property \eqref{conclFML4a} is satisfied if and only if $M(t_0)=U(t_0)$.
\end{proposition}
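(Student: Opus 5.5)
For part (i) the plan is a direct pointwise argument. The hypothesis \eqref{hypFML3} gives, for every $x\in\overline\Omega$, $F(u(x,t_1))\le \frac12|\nabla u(x,t_1)|^2+F(u(x,t_1))\le F(U(t_0))$. Since $f>0$ on $(0,\infty)$, $F$ is strictly increasing on $[0,\infty)$, so we obtain $u(x,t_1)\le U(t_0)$ for all $x$. Taking the supremum over $x$ gives $U(t_1)\le U(t_0)$. Note that $u$ is classical for $t_1>0$, while for $t_1=0$ the hypothesis is understood with $u_0\in Y$ (this is the only case where (i) is used in (ii)). Nothing further is needed, because a supremum is not required to be attained.

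For part (ii), ``if'': assume $M(t_0)=U(t_0)$. Then \eqref{hypFML0} is exactly \eqref{hypFML}, so Proposition~\ref{thm2} applies and yields \eqref{conclFML}, which is \eqref{conclFML4a}.

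``Only if'': assume \eqref{conclFML4a}. For any $t_1\in[0,t_0]$ the hypothesis \eqref{hypFML3} then holds at $t_1$. For $t_1<t_0$, part (i) gives $U(t_1)\le U(t_0)$, and for $t_1=t_0$ this is trivial. Hence $M(t_0)=\sup_{[0,t_0]}U\le U(t_0)\le M(t_0)$, so equality holds.

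There is no substantial obstacle; the only points needing care are the strict monotonicity of $F$ (from $f(s)>0$ for $s>0$) and the interpretation at $t=0$, where $u_0\in Y$ makes $\nabla u_0$ meaningful.
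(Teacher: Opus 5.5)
Your proposal is correct and follows the paper's proof essentially step for step. Part (i) uses the strict monotonicity of $F$; you apply it pointwise before taking the supremum, whereas the paper takes the supremum first. Part (ii) gets sufficiency from Proposition~\ref{thm2} and necessity from (i).
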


In one dimensional or radial situations, we even have a
necessary condition, for the validity of the gradient estimate at a given time.

\begin{proposition} \label{prop4}
Assume \eqref{hypf} and either $n=1$ and $\Omega$ is bounded, or $\Omega=B_R$ and $u_0$ radially symmetric.

\smallskip

(i)  The left and right derivatives $U'_-(t)$ and $U'_+(t)$ of the function $U(t)$ exist for all $t\in(0,T)$,
and $U'_+(t)\ge U'_-(t)$. Moreover $U(t)$ is differentiable except for at most countably many times.

\smallskip

(ii) Let $t_0\in(0,T)$ and assume that either $n=1$ or $U(t_0)>u(0,t_0)$. If 
\be{conclFML4}
\frac12|\nabla u|^2+F(u)\le F(U(t_0))\quad\hbox{ in $\overline\Omega\times\{t_0\}$,}
\ee
then $U'_-(t_0)\ge 0$. 
\end{proposition}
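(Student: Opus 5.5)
For (i), I plan to work with the map $t\mapsto\max_{x}u(x,t)$ through its maximum points. In the case $n=1$ with $\Omega=(a,b)$ bounded, and also in the radial case $\Omega=B_R$, $u(\cdot,t)$ is continuous up to the boundary and vanishes on $\partial\Omega$. For $t>0$ it is also positive inside: $f>0$ for $s>0$ together with the strong maximum principle gives this unless $u\equiv0$, which is a trivial case. Hence $U(t)=\max_{\overline\Omega}u(\cdot,t)$ is attained, and all maximum points lie in a compact subset of $\Omega$, uniformly for $t$ in compact subsets of $(0,T)$. I then use the standard Danskin-type argument. Let $Z(t)$ be the (compact) set of maximum points. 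By parabolic regularity, $u_t$ is continuous in $\Omega\times(0,T)$, and I expect
$$U'_+(t)=\max_{x\in Z(t)}u_t(x,t),\qquad U'_-(t)=\min_{x\in Z(t)}u_t(x,t).$$
The lower bound for $U'_+$ follows from $U(t+h)\ge u(x,t+h)$ for $x\in Z(t)$. The upper bound comes from taking maximizers $x_h\in Z(t+h)$, which accumulate on $Z(t)$ by compactness and continuity, and applying the mean value theorem in time. The left derivative is handled symmetrically. This gives existence of both one-sided derivatives and $U'_+\ge U'_-$. Note that this step does not use $n=1$ or radiality.

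For the countability claim, I will use that $U$ is continuous and has one-sided derivatives everywhere, and invoke the classical fact that the set $\{t: U'_-(t)<U'_+(t)\}$ is countable. Concretely, for each such $t$ I pick rationals $r$ and $q_1<t<q_2$ with $U'_-(t)<r<U'_+(t)$ such that $U(s)-U(t)>r(s-t)$ for all $s\in(q_1,q_2)\setminus\{t\}$. The resulting assignment $t\mapsto(r,q_1,q_2)$ is injective, since two distinct points with the same triple would contradict each other.

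For (ii), I argue by contradiction and assume $U'_-(t_0)<0$. By the formula above, every maximum point $x_0\in Z(t_0)$ then has $u_t(x_0,t_0)<0$. Fix such an $x_0$. At $x_0$ we have $\nabla u=0$ and $\Delta u=u_t-f(U(t_0))<-f(U(t_0))$. The hypothesis \eqref{conclFML4} says $G:=\frac12|\nabla u|^2+F(u)-F(U(t_0))\le0$, with equality at $x_0$. So $x_0$ is a maximum point of $G$, which requires $D^2G(x_0)\le0$. At a critical point of $u$ one computes $D^2G=(D^2u)^2+f(u)D^2u$.

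In dimension one this reads $u_{xx}(u_{xx}+f(U))\le0$. Since $u_{xx}<-f(U)$, we have $u_{xx}+f(U)<0$ and $u_{xx}<0$, so the product is strictly positive, which is a contradiction. In the radial case with $U(t_0)>u(0,t_0)$, the maximum point lies at some $r_0>0$. There $u_{rr}=\Delta u-\frac{n-1}{r}u_r=\Delta u<-f(U)$. I then apply the same one-variable computation to $G$ as a function of $r$, where $G=\frac12u_r^2+F(u)-F(U)$ and $G_{rr}=u_{rr}^2+f(u)u_{rr}$ at $r_0$. This is again positive, which contradicts $G_{rr}(r_0)\le0$.

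The main obstacle is justifying the derivative formula for $U'_-$ via maximizers and continuity of $u_t$ up to $t_0$ from the left; for the contradiction, it suffices to use only $U'_-(t_0)\ge\min_{Z(t_0)}u_t$ at a suitable maximum point.
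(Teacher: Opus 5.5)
Your proof is correct, and it takes a genuinely different route from the paper's.

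\textbf{How the paper argues.} For (i) it uses Sturmian zero-number theory. The zero set of $u_r(\cdot,t)$ is finite, and its cardinality drops only at multiple zeros. So just before and just after $t_0$ the critical points form finitely many $C^1$ curves $r_i^\pm(t)$, and $U$ is the maximum of the $C^1$ functions $u(r_i^\pm(t),t)$. Countability of the non-differentiability set then comes from strict crossings of these branches, between the finitely many times in $(\varepsilon,T)$ where $u_r$ has a multiple zero. In (ii) the paper again uses finiteness of the zeros of $u_r(\cdot,t_0)$ to get $u_r<0$ just to the right of $r_0$. It then concludes from $\hat J_r=u_r(u_{rr}+f(u))>0$.

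\textbf{How you argue.} Your Danskin argument, $U'_+(t)=\max_{Z(t)}u_t(\cdot,t)$ and $U'_-(t)=\min_{Z(t)}u_t(\cdot,t)$, uses only compactness of $\overline\Omega$ and continuity of $u_t$. The same holds for the classical injection $t\mapsto(r,q_1,q_2)\in\mathbb{Q}^3$ via strict local minima of $U(s)-rs$. So you actually prove (i) in any bounded domain, in any dimension, without symmetry. The $1$D/radial hypothesis enters only in (ii), where at a maximum point with $r_0>0$ it reduces $\Delta u$ to the single second derivative $u_{rr}$. In (ii), your observation $u_{rr}(r_0,t_0)=u_t(r_0,t_0)-f(U(t_0))<0$ makes the maximum nondegenerate, and this replaces the zero-number input.

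\textbf{What each buys.} The paper's approach gives a finer structural picture: finitely many smooth branches of maxima, with kinks of $U$ only at their crossings. That picture is not needed for the statement; your approach is shorter and more general.

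\textbf{Two small points.}
\begin{itemize}
\item $U'_-(t_0)<0$ does not give $u_t<0$ at \emph{every} maximum point; it gives this only at a minimizer of $u_t$ over $Z(t_0)$. That is all you use. In the radial case every maximum point has $r_0>0$ by the assumption $U(t_0)>u(0,t_0)$, so nothing breaks.
\item Since $f$ is only $C^1$, $u(\cdot,t_0)$ need not be $C^3$, so $G$ need not be $C^2$. Your second-order test still holds. The gradient $\nabla G=(D^2u+f(u)I)\nabla u$ is differentiable at the critical point $x_0$, with derivative $H=(D^2u+f(u)I)D^2u(x_0)$. Hence $G(x_0+h)-G(x_0)=\tfrac12h^{T}Hh+o(|h|^2)$. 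Alternatively, in the one-variable form, $G_r=u_r(u_{rr}+f(u))$ with $u_r<0$ and $u_{rr}+f(u)<0$ on $(r_0,r_0+\varepsilon)$, so $G>0$ there.
\end{itemize}
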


Estimate \eqref{conclFML4} near the maximum points of $u(\cdot,t_0)$
is applied in \cite{FML} and in subsequent works \cite{lacey, fila2008nonconstant}
to derive various properties of blow-up solutions
as $t\to T<\infty$, where $T$ is the blow-up time. 
Its applicability thus depends on the availability of the property:
$$\hbox{$U(t)$ is nondecreasing for $t$ close to $T$,}\leqno(P')$$
 but Theorem~\ref{thmNonMon} shows that the latter may fail, at least for $\Omega=\R^n$.
Without knowing (P'), one can a priori only assert the existence of a {\it sequence} of times $t_j\to T$
such that $U(t_j)=M(t_j)$, so that estimate \eqref{conclFML4} holds at times $t_j$ by Proposition~\ref{prop3}(ii).
Property (P') is however known to hold under various sufficient conditions (see Remark~\ref{rem1}(i)). 
\smallskip

Regarding the applications of estimate \eqref{conclFML4} in \cite{FML,lacey, fila2008nonconstant},
the situation is as follows:

\smallskip
\begin{itemize}

\item[$\bullet$] In \cite[Theorem~3.2]{FML}, for problem \eqref{eqE1} with $f(s)=(s+\lambda)^p$, $p>1$, $\lambda\ge 0$,
property \eqref{conclFML4} is used to show the blow-up of supercritical norms, i.e.:
\be{BUnorm}
\lim_{t\to T}\|u(t)\|_q=\infty,\quad q>n(p-1)/2.
\ee
Without knowing property (P'), the argument in the proof of \cite[Theorem~3.2]{FML}
only yields \eqref{BUnorm} with a $\limsup$ instead of a limit.
However, property \eqref{BUnorm} (even in nonconvex domains) follows from \cite{We},
which is based on, completely different, semigroup techniques.
See Remark~\ref{remFML} for further comments.
 
 \smallskip
 
\item[$\bullet$] In \cite[Theorem~1]{fila2008nonconstant}, 
 a type I blow-up estimate is proved for problem \eqref{eqE1} with $f(s)\sim e^s$ as $s\to\infty$,
 and property \eqref{conclFML4} is used in the proof.
The conclusion is valid since case (c) from Remark~\ref{rem1}(i) occurs under their assumptions.

\smallskip

\item[$\bullet$] In \cite{lacey}, for problem \eqref{eqE1} with $f(s)\sim s\log^ps$, as $s\to\infty$, and $p\in(1,2]$,
property \eqref{conclFML4} is used to show global or regional blow-up of all solutions, namely:
\be{BUglobal}
\lim_{t\to T}u(x,t)=\infty,\quad
	\begin{cases} 
	&\hbox{for all $x\in\Omega$, if $p\in(1,2)$} \\
	&\hbox{for all $x\in\Sigma$, if $p=2$,}
		\end{cases} 
\ee
where $\Sigma$ is some subset of positive measure.
Since property \eqref{conclFML4} is used at all times close to $T$,
the arguments in \cite{lacey} do not seem to work without knowing property (P')
(the latter holds for instance in case (a) or (c)  from Remark~\ref{rem1}(i)).
However, property \eqref{BUglobal} was proved recently in \cite{LBS} 
(even in nonconvex domains and for more general problems),
 by different arguments which avoid the use of \eqref{conclFML4}.
For radial solutions, results on global or regional blow-up can be found in \cite{GV1, GV2}.
\end{itemize}

\begin{remark} \label{remFML} \rm
The proof of \cite[Theorem~3.2]{FML} is based on the estimate 
\be{r0}
r_0>c/(u_0^{(p-1)/2}), 
\ee
where $u_0=u(x_0,t_0)=U(t_0)$, $c=1/4$
and $r_0=\sup\{r:u(x,t_0)\ge u_0/2\hbox{ if }|x-x_0|\le r\}$.
Fix $\alpha\in(0,1)$.
If $U(t_0)\ge \alpha M(t_0)$, where $M(t_0)=\sup_{t\le t_0}U(t)$,
then the arguments in \cite{FML} can be modified to prove \eqref{r0}
with $c=\alpha^{(p+1)/2}/4$, which would be sufficient for the conclusion of \cite[Theorem~3.2]{FML}.
However, Theorem~\ref{thmNonMon} shows that the inequality $U(t_0)\ge \alpha M(t_0)$
need not be true in any left neighborhood of $T$.

Similar comment applies to the arguments in \cite{lacey}.
\qed \end{remark}

The rest of the paper is organized as follows.
Theorem~\ref{thmNonMon} is proved in Section~\ref{SecProofs}.
In Section~\ref{SecProof2} we prove Theorems~\ref{thm1}-\ref{thm1a}
and state and prove Theorem~\ref{thm1aN} (a more general and stronger version of Theorem~\ref{thm1a}).
Finally, Section~\ref{SecProof3} is devoted to the proof of the results concerning gradient estimates, namely Propositions~\ref{thm2}-\ref{prop4}.


\section{Proof of Theorem~\ref{thmNonMon}}
\label{SecProofs}

We begin this section by stating the precise local existence-uniqueness result mentioned before property \eqref{BUalt}
		(see, e.g.,~\cite[Definition 15.1 and Proposition 16.1]{QS}):
There exist a unique $T=T(u_0)\in(0,\infty]$  and a unique function $u\ge 0$ such that
		$$u\in C^{2,1}(\overline\Omega\times(0,T))\cap C((0,T),L^\infty(\Omega)),
\quad\lim_{t\to0}\|u(t)-e^{-tA}u_0\|_\infty=0,$$
where $e^{-tA}$ is the Dirichlet heat semigroup in $\Omega$,
$u$ is a classical solution of \eqref{eqE1} for $t\in(0,T)$,
and \eqref{BUalt} holds.

\begin{proof}[Proof of Theorem~\ref{thmNonMon}]
We shall actually prove 
the existence of an increasing sequence $N_k\nearrow\infty$ such that,
for any increasing sequence $\{M_k\}$ satisfying $M_k>N_k+1$, 
there exist $u_0\in L^\infty(\R^n)$, $u_0\ge 0$, and
an increasing sequence $t_k\nearrow T$, such that
\be{nonmonotBU2}
\|u(\cdot,t_{2k})\|_\infty<N_k+1<M_k<\|u(\cdot,t_{2k-1})\|_\infty<M_k+1.
\ee
This will immediately imply Theorem~\ref{thmNonMon} (by considering the sequence $N'_k:=N_k+1$).
Since the proof  is rather involved, we split it into several steps for clarity.
 Throughout the proof, $C$ denotes a generic positive constant depending only on~$n,p$.

\smallskip

{\bf Step 1.} {\it Choice of a threshold solution.}
Recall the Joseph-Lundgren and Lepin exponents $p_{JL}, p_L$, respectively given by \eqref{pJL} and 
\be{pL}  
  p_L:=\begin{cases}
         \infty & \hbox{if }n\leq10, \\
         1+\frac6{n-10} & \hbox{if }n>10,
        \end{cases}
\ee
and note that $p_{JL}<p_L$ if $n>10$. 

First assume $p<p_L$. Fix $T^*>0$, $\delta\in(0,T^*]$ and let
\be{ustarpL}
u^*(x,t):=\begin{cases}
\frac1{(T^*-t)^\beta}h(\frac{|x|}{\sqrt{T^*-t}}), & t<T^*, \\
\frac1{(t-T^*)^\beta}g(\frac{|x|}{\sqrt{t-T^*}}), & t>T^*,
\end{cases} 
\ee
be the peaking solution with $h,g\in C^2([0,\infty))$ being positive solutions
of the equations
$$ \begin{aligned}
h''+\Bigl(\frac{n-1}r-\frac r2\Bigr)h'-\frac1{p-1}h+h^p &=0, \quad r>0, \\
g''+\Bigl(\frac{n-1}r+\frac r2\Bigr)g'+\frac1{p-1}g+g^p &=0, \quad r>0, 
\end{aligned}
$$
satisfying conditions $h'(0)=g'(0)=0$ and
$\lim_{r\to\infty}r^{2\beta}h(r)=\lim_{r\to\infty}r^{2\beta}g(r)=\ell>0$,
see \cite{GV97} and \cite[Appendix Ga]{QS}.
Notice that $u^*(x,T^*):=\lim_{t\to T^*}u^*(x,t)=\ell|x|^{-2\beta}$ for
$x\ne0$.
The function $u^*(\cdot,t)$ is radially nonincreasing 
(see \cite[Lemma 2.2]{M09} if $t<T^*$) and $u^*(0,t)$ 
is increasing from $0$ to $\infty$ for $t\in(-\infty,T^*)$
and decreasing from $\infty$ to $0$ for $t\in(T^*,\infty)$.

Next assume $p\ge p_L$ ($p_S<p\ne p_{JL}$ would do). 
Let $\psi\in{\mathcal D}(\R^n)$ be nonnegative, radially symmetric and nonincreasing,
$\psi\not\equiv0$.
Let $u_\lambda$ be the solution with initial data $\lambda\psi$,
$$\lambda^*:=\sup\{\lambda>0:\hbox{$u_\lambda$ is global and $\|u_\lambda(\cdot,t)\|_\infty\to0$ as $t\to\infty$}\}$$
and let $u^*:=u_{\lambda^*}$ be the threshold solution.
It is known that $u^*$ blows up in $L^\infty(\R^n)$ at some $T^*\in(0,\infty)$,
the blow-up is incomplete
(and of type II if $p>p_L$, see \cite[Proposition~1.8]{MM11}).
Moreover,
the proper continuation (still denoted by $u^*$) is smooth 
in $Q:=(\R^n\times(0,T^*+ \delta])\setminus\{(0,T^*)\}$
for some $\delta>0$  and we have $u_\lambda\nearrow u^*$ in $Q$, see  
\cite[property (iv) on p.~720 and Section~2.4]{MM11}  and cf.~also
\cite[Corollary 5.20, Theorem 5.21]{MM09}, \cite[Lemma 2.5]{MM11} and \cite[Section~2]{GV97}.
We can assume $\delta\le T^*$.

We also claim that
\be{Nboundfg}
u^*(x,t)\leq C|x|^{-2/(p-1)},\qquad x\ne0,\quad 0<t\le T^*+\delta.
\ee
In case $p<p_L$, this is a direct consquence of \eqref{ustarpL}.
If $p\ge p_L$, then this follows from
 the Kaplan blow-up argument in $B_R$ (see \cite[Lemma~2.1]{M02}), 
used for the global radially nonincreasing solutions $u_\lambda$.

\smallskip

{\bf Step 2.} {\it Construction of the solution.} 
Since the function $[0,T^*)\cup(T^*,T^*+\delta]\to(0,\infty):t\mapsto  u^*(0,t)=\|u^*(\cdot,t)\|_\infty$ is continuous
and $\lim_{t\to T^*}u^*(0,t)=\infty$, given $N\gg1$, there exist
$\delta/2<t_1(N)<T^*<t_2(N)<T^*+\delta/2$ such that 
\be{NtM}
\max_{t\in[0,t_1(N)]}u^*(0,t)=u^*(0,t_1(N))=u^*(0,t_2(N))=\max_{t\in[t_2(N),t^*+\delta]}u^*(0,t)= N.
\ee
In addition 
 $\lim_{N\to\infty} t_1(N)=\lim_{N\to\infty} t_2(N)=T^*$.
Choose an increasing sequence $N_k\nearrow\infty$ such that 
\be{N1}
N_1>\sup_{t\in[0,\delta/2]\cup[T^*+\delta/2,T^*+\delta]}u^*(0,t)+1
\ee 
and 
\be{Deltak}
\Delta_k:=t_{k,2}-t_{k,1}\le 2^{-k-2}\delta,\quad\hbox{where}\quad t_{k,i}:=t_i(N_k),\quad k=1,2,\dots.
\ee
Set $T_1:=T^*-\delta/2$ and $T_{k+1}:=T_k+\Delta_k$.
 Then we have 
\be{defT}
T:=\lim_{k\to\infty}T_k\in(T^*-\delta/2,T^*-\delta/4]
\qquad\hbox{and}\qquad
 \lim_{k\to\infty}t_{k,i}=T^*,\ i=1,2.
\ee

Let $M_k$  be any increasing sequence satisfying $M_k>N_k+1$. 
Let $\varphi\in C^\infty(\R^n)$ be radial, radially nonincreasing and 
satisfy $\varphi(x)=1$ for $|x|\le 1$, $\varphi(x)=0$ for $|x|\ge 2$.
By continuous dependence with respect to initial data (in $L^q(\R^n)$ with $q>n(p-1)/2$) 
we may select  $\rho_k>0$ such that
the radially nonincreasing solution $w_k$ 
 with initial data  
$$w_k(x,0)=u^*(x,T^*-T_k)\varphi(\rho_k^{-1}x)$$ 
exists on $[0,T_k+\delta]$ and
satisfies
\be{wkMk}
\sup_{\R^n\times[0,T_k+\delta]} w_k=w_k(0,t_k)=M_k,
\ee 
with $T_k+\delta\ge T^*+\delta/2$.
Notice that 
\be{wkustar}
w_k(x,t)\le u^*(x,t+T^*-T_k),
\ee
 hence 
\be{tkTstar}
t_k\in(t_{k,1}+T_k-T^*,t_{k,2}+T_k-T^*)
\ee
 and
\be{NsupAk}
A:=\sup_{k\ge 1}\|w_k(\cdot,0)\|_{L^\infty(\R^n)}
    =\sup_{k\ge 1}w_k(0,0) \le\sup_{t\in[0,\delta/2]}u^*(0,t)<\infty.
\ee

\begin{figure}[ht]
\centering
\begin{picture}(420,180)(0,0)
\unitlength=0.7pt
\put(286,-16){\makebox(0,0)[c]{$t_k+T^*-T_k$}} 
\put(288,-7){\vector(1,3){9}}
\put(307,6){\makebox(0,0)[c]{$T^*$}}
\put(495,6){\makebox(0,0)[c]{$t$}}
\put(215,6){\makebox(0,0)[c]{$t_{k,1}$}}
\put(333,6){\makebox(0,0)[c]{$t_{k,2}$}}
\put(90,6){\makebox(0,0)[c]{$T^*-T_k$}}
\put(150,130){\makebox(0,0)[c]{$\fblue u^*(0,t)$}}
\put(375,140){\makebox(0,0)[c]{$\fblue u^*(0,t)$}}
\put(460,95){\makebox(0,0)[c]{$\fred w_k(0,t-T^*+T_k)$}}
\put(399,87){\vector(-1,-3){6}}
\put(293,220){\line(1,0){50}}
\put(360,220){\makebox(0,0)[c]{$M_k$}}
\put(20,20){\vector(1,0){480}}
{\linethickness{1pt}
{\fblue%
\bezier500(310,240)(350,50)(500,40) 
\bezier500(20,40)(180,50)(220,188)
\bezier500(220,188)(226,210)(234,210)
\bezier500(234,210)(242,210)(248,188)
\bezier500(248,188)(254,166)(260,166)
\bezier500(260,166)(270,166)(290,240)
}%
{\fred%
\bezier500(80,49)(180,70)(214,150)
\bezier500(214,150)(230,200)(234,200)
\bezier500(234,200)(240,200)(245,175)
\bezier500(245,175)(250,158)(262,158)
\bezier500(262,158)(270,158)(279,185)
\bezier500(279,185)(290,220)(296,220)
\bezier500(296,220)(310,220)(325,170)
\bezier500(325,170)(370,41)(500,33)
}%
}
\put(221,189){\line(1,0){114}}
\put(350,188){\makebox(0,0)[c]{$N_{k}$}}
\newcount\xpos \newcount\ypos \newcount\nr

\xpos=300 \ypos=20
\loop \ifnum\ypos<240
\put(\xpos,\ypos){\line(0,1){3}}
\advance\ypos by6
\repeat

\xpos=296 \ypos=20
\loop \ifnum\ypos<220
\put(\xpos,\ypos){\line(0,1){3}}
\advance\ypos by6
\repeat

\xpos=220 \ypos=20
\loop \ifnum\ypos<187
\put(\xpos,\ypos){\line(0,1){3}}
\advance\ypos by6
\repeat
\put(275,187){\line(0,1){3}}

\xpos=323 \ypos=20
\loop \ifnum\ypos<187
\put(\xpos,\ypos){\line(0,1){3}}
\advance\ypos by6
\repeat
\put(323,187){\line(0,1){3}}

\put(80,20){\line(0,1){3}}
\put(80,26){\line(0,1){3}}
\put(80,32){\line(0,1){3}}
\put(80,38){\line(0,1){3}}
\put(80,44){\line(0,1){3}}

\end{picture}
\vskip5mm
\caption{Graphs of $\fblue t\mapsto u^*(0,t)$
         and $\fred t\mapsto w_k(0,t-T^*+T_k)$.}
   \label{fig-psi2}
\end{figure}


Consider $x_k=(x_k^1,0,0,\dots,0)\in\R^n$
such that $x_1^1=0$
and 
\be{defxk}
x_{k+1}^1=x_k^1+R_k'+R_{k+1}',
\ee
where the numbers $R_1'<R_2'<\dots$ satisfying $R'_k>4\rho_k$ will be chosen in Step~7 below.
Let 
$$v_k(x,t)=w_k(x-x_k,t)$$ 
 be the solution with initial data
$v_{0,k}(x)=w_k(x-x_k,0)$.
Notice that
$\sup_{\R^n\times[0,T_k+\delta]} v_k=v_k(x_k,t_k)=M_k$.
 Set $\tilde t_k:=t_{k,2}+T_k-T^*$. 
 Using \eqref{tkTstar}, \eqref{NtM} and \eqref{Deltak}, we get $t_k<\tilde t_k$ and
\be{tkmonot0}
t_{k+1}>t_{k+1,1}+T_{k+1}-T^*>t_{k,1}+T_{k+1}-T^*=t_{k,2}+T_k-T^*,
\ee
hence
\be{tkmonot}
t_k<\tilde t_k<t_{k+1},
\ee
and we also have
\be{limtk}
\lim_{k\to\infty}t_k=T
\ee
in view of \eqref{defT}, \eqref{tkTstar}.
We claim that
\be{Ntildetk0}
\|v_j(\cdot,\tilde t_k)\|_\infty\le N_k,\quad j, k\ge 1
\ee
and
\be{Ntildetk1}
\|v_j(\cdot,t_k)\|_\infty\le M_k,\quad j, k\ge 1.
\ee
Indeed, since 
$$\tilde t_k+T^*-T_j=t_{k,2}+T_k-T_j
\ \begin{cases}
 \ \le t_{k,1} &\quad\hbox{if }\  j>k,\\
 \ \ge t_{k,2} &\quad\hbox{if }\ j\le k,
\end{cases}
$$
and \eqref{NtM} is true with $N=N_k$, $t_1(N)=t_{k,1}$ and $t_2(N)=t_{k,2}$,
 it follows from \eqref{wkustar} that
$$
\|v_j(\cdot,\tilde t_k)\|_\infty\le u^*(0,\tilde t_k+T^*-T_j)
\le \begin{cases}
u^*(0,t_{k,1})=N_k &\quad\hbox{if }\ j>k,\\
u^*(0,t_{k,2})=N_k &\quad\hbox{if }\ j\le k,
\end{cases}
$$
hence \eqref{Ntildetk0}.
On the other hand, if $j\le k$, then 
$\|v_j(\cdot,t_k)\|_\infty\le M_j\le M_k$ by \eqref{wkMk} whereas, if $j>k$ then
$t_k+T^*-T_j\le t_k+T^*-T_k-\Delta_k<t_{k,2}-\Delta_k=t_{k,1}$ owing to \eqref{tkTstar}
 and \eqref{Deltak}, so that
$\|v_j(\cdot,t_k)\|_\infty\le u^*(0,t_k+T^*-T_j)\le N_k<M_k$ by \eqref{NtM} and \eqref{wkustar}.
This proves~\eqref{Ntildetk1}.

Let $u_m$ ($m\ge 2$) and $u$ be the solutions with initial data 
$$u_m(x,0)=\sum_{k=1}^m v_{0,k}(x),\qquad 
u(x,0)=\sum_{k=1}^\infty v_{0,k}(x),$$
which belong to $L^\infty(\R^n)$, with $\|u_m(\cdot,0)\|_{L^\infty(\R^n)}, 
\|u(\cdot,0)\|_{L^\infty(\R^n)}\le A$,
in view of \eqref{NsupAk} and of the fact that the functions $v_{0,k}$ have disjoint compact supports.
 Denote their maximal existence time by $T^m_*, T_*$, respectively.
For each $m\ge 2$ and $k\in\{1,\dots,m\}$,
 since $v_{0,k}$ and $u_m(\cdot,0)$ are continuous with compact support,
 standard heat semigroup properties imply
\be{contumvk}
 u_m-v_k\in C(\R^n\times[0,T_*^m)),\quad \lim_{R\to \infty}\Big(\sup_{|x|\ge R,\,s\in[0,t]} |(u_m-v_k)(x,s)|\Big)=0,
\ t<T_*^m.
\ee

\smallskip

{\bf Step 3.} {\it Sought-for estimate on $u-v_k$.} 
Fix any $\tilde T\in(T,T^*+\delta/2)$. Denote 
$$D_1':=(-\infty,x_1^1+R_1']\times\R^{n-1},\qquad
D_i':=[x_i^1-R_i',x_i^1+R_i']\times\R^{n-1},\quad i\ge 2.$$
We will show that 
\be{NestDk0}
 T^m_*>\tilde T \quad\hbox{and}\quad
 \sup_{D_k'\times[0,\tilde T]} (u_m-v_k)\le 1/2,\quad m\ge 2,\ k\in\{1,\dots,m\}.
\ee
Since $u_m\nearrow u$ in $\R^n\times(0,T_*)$, inequality 
\eqref{NestDk0} implies
\be{NestDk}
\sup_{D_k'\times[0,\min(\tilde T,T_*))} (u-v_k)\le 1/2,\quad k\ge 1
\end{equation}
and, since 
$\R^n=\bigcup_{k\ge 1}D_k'$ (owing to $x_{k+1}^1=x_k^1+R_k'+R_{k+1}'$), 
it follows from \eqref{NestDk} that
\be{NestDkB}
\|u(\cdot,t)\|_\infty \le \frac 12+\sup_{j\ge 1} \|v_j(\cdot,t)\|_\infty,\quad t\in [0,\min(\tilde T,T_*)).
\ee
Consequently, for any $T'\in(0,\min(T,T_*))$, using \eqref{wkMk}, \eqref{wkustar}, we obtain
$$
\begin{aligned} 
\|u(\cdot,t)\|_\infty &\le \frac 12+\sup_{j\ge 1} \|v_j(\cdot,t)\|_\infty
\le \frac 12+\max\Big(M_{k_0},\sup_{k\ge k_0} u^*(0,t-T_k+T^*)\Big)\\
&\le \frac 12+\max\Big(M_{k_0},\sup_{0\le \tau<T'-T_{k_0}+T^*}u^*(0,\tau)\Big)
<\infty,\quad t\in(0,T'),
\end{aligned}$$
where $k_0=k_0(T')$ is such that $T_{k_0}>T'$.
 We deduce that $T_*\ge T$.

On the other hand, \eqref{NestDkB} combined with \eqref{Ntildetk0}-\eqref{Ntildetk1} 
guarantees that
\be{nonmonot1}
\|u(\cdot,\tilde t_k)\|_\infty<N_k+1,\quad
\|u(\cdot,t_k)\|_\infty<M_k+1,
\ee
whereas \eqref{wkMk} implies
\be{nonmonot2}
\|u(\cdot,t_k)\|_\infty>M_k.
\ee
 This along with \eqref{limtk} guarantees that $u$ blows up in $L^\infty(\R^n)$ at time $T_*=T$.
Relabeling $t_k$ as $t_{2k-1}$ and $\tilde t_k$ as $t_{2k}$,
 it follows from \eqref{tkmonot} that the new sequence $\{t_k\}$ is increasing, and
inequalities \eqref{nonmonot1} and \eqref{nonmonot2} yield the non-monotonicity property \eqref{nonmonotBU2}.

\smallskip

{\bf Step 4.} {\it Trapping region.}
In order to prove \eqref{NestDk0},
we will show that the functions $u_m-v_k$ cannot escape a suitably defined trapping region.
This will require a careful splitting of the space-time integrals
in the Duhamel representation formula \eqref{Nidentvk} for these functions.

Fix $m\ge 2$, 
set $\tilde D_i'=D_i'$ if $1\le i\le m-1$ and $\tilde D_m'=[x_m^1-R_m',\infty)\times\R^{n-1}$.
Set also 
$$D_1:=(-\infty,x_1^1+R_1]\times\R^{n-1},\qquad
D_i:=[x_i^1-R_i,x_i^1+R_i]\times\R^{n-1},\quad i\ge 2,$$
where the numbers
$R_1<R_2<\dots$ satisfying $R_1>2\sqrt{\tilde T}$, $R_i>2\rho_i$ and $R_i'\ge R_i+R_{i+1}$ will be chosen in
Step~7,
$\tilde D_i:=D_i$ if $1\le i\le m-1$ and $\tilde D_m:=[x_m^1-R_m,\infty)\times\R^{n-1}$.
These sets are schematically represented on Figure~\ref{fig-Di} below
for convenience.

We then define ${\hat v}_i=u_m-v_i$,
$$ Y_i(t):=\eps_ie^{L_it}, \ i=1,2,\dots,m, $$
where $\eps_1>\eps_2>\dots>0$ and $0<L_1<L_2<\dots$ 
satisfy 
\be{Leps}
\begin{aligned}
 &L_i\ge 3(8+2p(M_{i+1}+1)^{p-1}),\quad \eps_i\le\frac12e^{-L_i\tilde T},\quad i=1,2,\dots,\\
 &\eps_i\le\eps_{i-1}e^{-(L_i-L_{i-1})\tilde T},\quad  i=2,3,\dots.
\end{aligned}
\ee
Notice that \eqref{Leps} implies, 
for all $t\in[0,\tilde T]$, $Y_i(t)\le1/2$ for $i=1,2\dots$ and
$Y_{i-1}(t)\ge Y_i(t)$ for $i=2,3,\dots$.
In Step~7, 
 the choice of $R_i,R_i'$ will also guarantee that 
\be{Yi1}
Y_{i-1}\frac{R_i}{R_i'}\le \frac12 Y_i,\quad i=2,3,\dots,m.
\ee

To define our trapping region we consider,
for each $t$, 
a continuous, piecewise linear function
$Y(\cdot,t):\R\to(0,1/2]$ such that
\be{partY0}  
Y(x^1,t):=\begin{cases}
 Y_1(t), & x^1\le x_1^1, \\
 Y_i(t), & x^1=x_i^1, \ i=2,3,\dots,m-1, \\
 Y_m(t),  & x^1\ge x_m^1,
\end{cases}
\ee
and 
\be{partY}  
\begin{aligned}
&\hbox{$Y(\cdot,t)$ is linear on } [x_{i-1}^1,x_i^1],\ i=2,3,\dots,m,\ \hbox{with} \\
&\partial_{x^1}Y(x^1,t)=\frac{Y_{i}(t)-Y_{i-1}(t)}{x_i^1-x_{i-1}^1}\ge-\frac{Y_{i-1}(t)}{R_i'+R_{i-1}'},
\quad x^1\in(x_{i-1}^1,x_i^1), \ i=2,3.\dots,m,
\end{aligned}
\ee 
 where the inequality follows from $Y_i(t)>0$ and \eqref{defxk}.
Note that $Y(\cdot,t)$ is nonincreasing and
\be{infY}
\inf_{\R\times[0,\tilde T]} Y=\eps_m>0.
\ee
We will show that
\be{vkY} 
{\hat v}_k(x,t)\le Y(x^1,t)\ \hbox{ for }\ x\in\tilde D_k',\ \ k=1,2,\dots,m,\ \ t\in[0,\min(\tilde T,T_*^m)),
\ee
and $T^m_*>\tilde T$,
which will imply \eqref{NestDk0} due to $Y\le1/2$.

Denote
$$ E:=\bigl\{t\in [0,\min(\tilde T,T_*^m)];\ \hat v_k(x,t')\le Y(x^1,t') \hbox{ for }(x,t')\in\tilde D_k'\times[0,t),\ k=1,2,\dots,m\bigl\}.$$
 Since \eqref{vkY} is satisfied for $t=0$ due to 
${\hat v}_k(x,0)=\sum_{j\ne k,\,j\le m}v_j(x,0)=0$ for $x\in\tilde D_k'$,  
 it follows from \eqref{contumvk}, \eqref{infY} that 
$$ \tau:=\sup E\in(0,\min(\tilde T,T_*^m)].$$
(The inequality $\tau>0$ follows also from the well-posedness of
\eqref{eqE1} with $f(u)=u^p$ in the space of bounded, uniformly continuous
functions equipped with the $L^\infty$-norm, see \cite[Proposition~7.3.1]{L95}, for example.) 
For Steps 5 and 6 below, 
$$ \hbox{we fix $k\in\{1,\dots,m\}$, $x\in \tilde D_k'$, $t\in[0,\tau)$.}$$
Since $\R^n=\bigcup_{1\le i\le m}\tilde D_i'$, we may write
\be{Nidentvk}
{\hat v}_k(x,t)=I(x,t)+\sum_{i=1}^m J_i(x,t),
\ee
where $I, J_i$ are given by
$$I=\int_{\R^n} G(x-y,t) {\hat v}_k(y,0)dy,\quad J_i =\int_0^t\int_{\tilde D_i'} G(x-y,t-s)(u^p_m-v_k^p)(y,s)dyds.$$ 
Taking advantage of the decay property \eqref{Nboundfg}
 and of the Gaussian decay of the heat kernel, we shall estimate $J_i(x,t)$ by time integrals of $Y(x^1,t)$
and/or by constants that will ultimately converge rapidly to $0$ as $k\to\infty$. 
The above choice of $Y$ is designed in a way to estimate the critical contribution, 
to the adjacent region $J_{k-1}$ (and $J_{k+1}$), of the neighboring points to $x$, for which the decay of the heat kernel is not effective.

\smallskip

{\bf Step 5.} {\it Estimate of $I=I(x,t)$.} 
Set 
$$\Sigma_i:=\big(x_i^1-R_i'-R_{i-1},
   x_i^1+R_i'+R_{i+1}\big)\times\R^{n-1},\quad 2\le i\le m-1$$
and $\Sigma_1=(-\infty,x_1^1+R_1'+R_{2})\times\R^{n-1}$, $\Sigma_m=(x_m^1-R_m'-R_{m-1},\infty)\times\R^{n-1}$.
We note that, since $v_j(y,0)=0$ if $|y-x_j^1|\ge R_j$,
we have
${\hat v}_k(y,0)=(u_m-v_k)(y,0)=0$
for all $y\in \Sigma_k$.
Also, since $x\in \tilde D_k'$, 
we have $|x^1-y^1|\ge R_{k-1}$  for all $y\in \R^n\setminus  \Sigma_k$, 
where $R_0:=R_2$.
By \eqref{NsupAk} it follows that
$$I
\le A\int_{\R^{n-1}} (4\pi t)^{-\frac{n-1}{2}}e^{-\frac{|x'-y'|^2}{4 t}}dy'
\int_{|z|\ge R_{k-1}} t^{-\frac12} e^{-\frac{|z|^2}{4t}}dz.$$
Using 
\be{Nintz0}
\int_{|z|\ge  R} t^{-\frac12} e^{-\frac{|z|^2}{4t}}dz
=2\int_{|\xi|\ge  R/2\sqrt{t}} e^{-|\xi|^2}d\xi\le 4\int_{R/2\sqrt{t}}^\infty \xi e^{-\xi^2/2}d\xi=4e^{-R^2/8t}
\ee
for $R\ge2\sqrt{t}$, it follows that
\be{NintI0}
 I \le 4A e^{-\tilde R_{k-1}^2/8\tilde T}.
\ee

\vskip -5mm
\def\ss{\scriptstyle}
\begin{figure}[ht]
\centering
\begin{picture}(400,220)(45,0) 
\unitlength=0.56pt

\linethickness{0.3pt}
\put(130,70){\line(0,1){230}}
\linethickness{0.8pt}
\put(200,70){\line(0,1){230}}
\put(540,70){\line(0,1){230}}
\linethickness{0.6pt}

\put(90,195){\line(0,1){10}}
\put(90,186){\makebox(0,0)[c]{$\ss x_{i-1}^1$}} 

\put(760,195){\line(0,1){10}}
\put(760,186){\makebox(0,0)[c]{$\ss x_{i+1}^1$}} 

\linethickness{0.3pt}
\put(310,70){\line(0,1){230}}
\put(430,70){\line(0,1){230}}
\put(680,70){\line(0,1){230}}
\put(90,205){\line(0,1){95}}
\multiput(160,70)(0,3){57}{\makebox(0,0){$\ss .$}}
\multiput(160,251)(0,3){17}{\makebox(0,0){$\ss .$}}
\put(370,205){\line(0,1){95}}
\put(90,70){\line(0,1){106}}
\multiput(620,70)(0,3){77}{\makebox(0,0){$\ss .$}}
\put(760,70){\line(0,1){106}}
\put(760,200){\line(0,1){100}}

\linethickness{0.6pt}
\put(70,200){\line(1,0){720}}
\put(784,202){$_{_>}$}
\put(790,188){$\ss x^1$}

\put(90,215){\line(1,0){40}}
\put(90,217){$_{_<}$}
\put(122,217){$_{_>}$}
\put(97,222){$\ss R_{i-1}$}

\multiput(160,215)(3,0){13}{\makebox(0,0){$\ss .$}}
\put(160,217){$_{_<}$}
\put(192,217){$_{_>}$}
\put(167,222){$\ss R_{i-1}$}

\put(90,238){\line(1,0){108}}
\put(90,240){$_{_<}$}
\put(191,240){$_{_>}$}
\put(135,246){$\ss R'_{i-1}$}

\put(370,195){\line(0,1){10}}
\put(370,186){\makebox(0,0)[c]{$\ss x_{i}^1$}} 
\put(370,177){\makebox(0,0)[c]{$\underbrace{\phantom{aaaaaaaaaai)}}$}} 
\put(370,153){\makebox(0,0)[c]{$\ss \tilde D_i$}} 
\put(370,150){\makebox(0,0)[c]{$\underbrace{\phantom{aaaaaaaaaaaaaaaaaaaaaaaaaaaaaaaa)}}$}} 
\put(370,125){\makebox(0,0)[c]{$\ss \tilde D'_i$}} 
\put(390,118){\makebox(0,0)[c]
{$\aunderbrace[l1R]{\phantom{aaaaaaaaaaaaaaaaaaaa}}\hskip -1mm$
$\aunderbrace[L1r]{\phantom{aaaaaaaaaaaaaaaaaaaaaaaa}}$}} 
\put(370,95){\makebox(0,0)[c]{$\ss \Sigma_i$}} 

\put(312,215){\line(1,0){58}}
\put(310,217){$_{_<}$}
\put(362,217){$_{_>}$}
\put(335,220){$\ss R_i$}

\put(372,215){\line(1,0){55}}
\put(370,217){$_{_<}$}
\put(421,217){$_{_>}$}
\put(392,220){$\ss R_i$}

\put(372,238){\line(1,0){167}}
\put(370,240){$_{_<}$}
\put(531,240){$_{_>}$}
\put(450,246){$\ss R'_i$}

\put(202,238){\line(1,0){167}}
\put(200,240){$_{_<}$}
\put(361,240){$_{_>}$}
\put(273,246){$\ss R'_i$}

\put(542,238){\line(1,0){216}}
\put(540,240){$_{_<}$}
\put(751,240){$_{_>}$}
\put(630,246){$\ss R'_{i+1}$}

\multiput(542,215)(3,0){27}{\makebox(0,0){$\ss .$}}
\put(540,217){$_{_<}$}
\put(610,217){$_{_>}$}
\put(570,220){$\ss R_{i+1}$}

\put(682,215){\line(1,0){77}}
\put(680,217){$_{_<}$}
\put(750,217){$_{_>}$}
\put(710,220){$\ss R_{i+1}$}

\end{picture}
\vskip -5mm
\caption{The sets $\tilde D_i$, $\tilde D_i'$ and $\Sigma_i$ for $2\le i\le m-1$.}
   \label{fig-Di}
\end{figure}

{\bf Step 6.} {\it Estimate of $J_i=J_i(x,t)$.}
Let $i\in\{1,\dots,m\}$. 
We consider the following cases separately. Recall that $x\in\tilde D_k'$.
\vskip 1pt

 $\bullet$ {\it Case 1: $i\ge k+2$.} Then, for $y\in\tilde D_i'$ and $s\in[0,t]$, we have $|x^1-y^1|\ge 2R_{i-1}'>4R_{i-1}$
and $u_m(y,s)\le M_i+Y(y^1,s)\le M_i+1$, hence
$$J_i
\le\int_0^t\int_{\R^{n-1}} (4\pi(t-s))^{-\frac{n-1}{2}}e^{-\frac{|x'-y'|^2}{4(t-s)}}dy'
\int_{|z|\ge  4R_{i-1}} (t-s)^{-\frac12} e^{-\frac{|z|^2}{4(t-s)}}(M_i+1)^pdzds.$$
Using \eqref{Nintz0}, 
 it follows that
\be{NintJi1}
J_i\le 4\tilde T e^{-2R_{i-1}^2/\tilde T}(M_i+1)^p,\quad\hbox{if } k+2\le i\le m.
\ee

 $\bullet$ {\it Case 2: $i\le k-2$.} Then $|x^1-y^1|\ge 2R_{k-1}'>4R_{k-1}$ for $y\in \tilde D_i'$, and we similarly obtain
\be{NintJi2}
J_i\le 4\tilde T e^{-2R_{k-1}^2/\tilde T}(M_i+1)^p,\quad\hbox{if } 1\le i\le k-2.
\ee

 $\bullet$ {\it Case 3:
$|i-k|\le1$.} Set
$$(\tilde D_i')^+=\tilde D_i'\cap\{y:|y^1-x^1|>R_k\},\quad (\tilde D_i')^-=\tilde D_i'\cap\{y:|y^1-x^1|\le R_k\}$$
and split $J_i=J_i^++J_i^-$ accordingly. 
The same estimates as above yield
\be{NintJi6}
J_i^+\le 4\tilde T e^{-R_{k}^2/8\tilde T}(M_i+1)^p,\quad\hbox{if } |i-k|\le1.
\ee

 It remains to estimate $J_i^-$.
To this end, setting $\hat D:=\bigcup_{|j-k|\le1}\tilde D_j'$,
we first prove the following inequalities
\be{Yk2}
Y_k(s)\le 2Y(x^1,s),
\ee
\be{Yyx}
Y(y^1,s)\le 2Y(x^1,s),\quad  y\in  \hat D^-:=\hat D\cap\{y:|y^1-x^1|\le R_k\}.
\ee
Since $Y(\cdot,s)$ is nonincreasing, we may assume $x^1\ge x^1_k$ in
the proof of \eqref{Yk2} and $y^1\le x^1$ in the proof of \eqref{Yyx}.
 In addition, since $Y(x^1,s)=Y_k(s)$ if $x^1\ge x^1_k$ and $k=m$,
we may assume $k<m$ in the proof of \eqref{Yk2}.
If $x^1\ge x^1_k$ and $k<m$, then $x^1\in[x^1_k,x^1_k+R_k']$ and the Mean Value
Theorem together with \eqref{partY} and $R_{k+1}'\ge R_k'$ imply
$$ Y(x^1,s)\ge Y_k(s)-R_k'\frac{Y_k(s)}{R_{k+1}'+R_k'}\ge \frac12Y_k(s),$$
hence \eqref{Yk2}. 
Now let 
$y\in\hat D^-$, $y^1\le x^1$, hence $y^1\in[x^1-R_k,x^1]$.  
We have $y^1\ge x^1-R_k\ge x^1_k-R'_k-R_k \ge x^1_k-R'_k-R'_{k-1}= x^1_{k-1}$ if $k>1$.
Notice that \eqref{Yi1}-\eqref{partY} and $R_{k+1}'\ge 2R_k$ imply
$$ |\partial_{x^1}Y(z,s)|\le 
\max\Bigl(\frac{Y_{k-1}(s)}{R_k'},\frac{Y_k(s)}{R_{k+1}'}\Bigr)\le \frac12\frac{Y_k(s)}{R_k},\quad
z\in(x_{k-1}^1,x_{k+1}^1)\cap(x^1_1,x^1_m),$$
with $Y_0=0$ and $x_0^1=-\infty$ if $k=1$, and $\partial_{x^1}Y(z,s)=0$ if $z<x^1_1$ or $z>x^1_m$.
 The Mean Value Theorem and \eqref{Yk2} now imply
$$ Y(y^1,s)\le Y(x^1,s)+R_k\Bigl(\frac12\frac{Y_k(s)}{R_k}\Bigr)\le 2Y(x^1,s),$$
hence \eqref{Yyx}.

\vskip 1pt
 We next estimate $J_i^-$ according to the following two subcases.
\vskip 1pt

 $\bullet$ {\it Case 3.1: $i=k$.} For $y\in (\tilde D_k')^-$, using $Y\le1/2$ and \eqref{Yyx} we obtain
$$(u^p_m-v^p_k)(y,s)\le pu_m^{p-1}{\hat v}_k(y,s)\le p(M_k+1)^{p-1}Y(y^1,s)\le 2p(M_k+1)^{p-1}Y(x^1,s),$$
hence
\be{NintJi5}
J_k^-\le 2p(M_k+1)^{p-1}\int_0^t Y(x^1,s)\,ds.
\ee

$\bullet$ {\it Case 3.2: $|i-k|=1$.
For} $y\in (\tilde D_i')^-$,  
we have  $|y^1-x_i^1|\ge R_i'-R_k\ge R_i$,
hence  $v_i(y,s)\le CR_i^{-2\beta}$  by \eqref{Nboundfg},
so that $Y\le1/2$ and \eqref{Yyx} imply
$$u_m^p(y,s)\le 2^p(v_i^p(y,s)+{\hat v}_i^p(y,s))\le  CR_i^{-2p\beta}+2^pY^p(y^1,s)
\le  CR_i^{-2p\beta}+ 4Y(x^1,s).$$
Therefore,
\be{NintJi3}
J_i^-\le  C\tilde T R_i^{-2\beta p}+ 4\int_0^t Y(x^1,s)\,ds,\quad\hbox{if }|i-k|=1.
\ee

{\bf Step 7. {\it Choice of the sequences $\{R_i\}, \{R'_i\}$ and conclusion.}}
Consider $t<\tau$, $k\in\{1,\dots,m\}$ and $x\in\tilde D_k'$.
Combining \eqref{Nidentvk}, \eqref{NintI0}--\eqref{NintJi6}, \eqref{NintJi5}--\eqref{NintJi3} 
we obtain
$$\hat v_k(x,t)\le \delta_k+(8 +2p(M_k+1)^{p-1})\int_0^t Y(x^1,s)\,ds,$$
where 
$$\delta_k:=  4(A+k\tilde T(M_k+1)^p) e^{-\tilde R_{k-1}^2/8\tilde T}
+ C\tilde T \Big\{R_{k-1}^{-2\beta p}+R_{k+1}^{-2\beta p}+\sum_{i=k+1}^\infty e^{-R_{i-1}^2/8\tilde T}(M_i+1)^p\Big\}.$$
 It is easily seen that the $R_i>2\rho_i$ can be chosen large enough (independently of $m$)
so that 
$\delta_k\le \frac13\eps_{k+1}$.
 Consequently $\delta_k\le\frac13 Y(x^1,t)$,
 owing to $x^1<x_{k+1}^1$.
Once we fixed the $R_i$, we choose $R'_i>4\rho_i$ such that 
the condition $R_i'\ge R_i+R_{i+1}$ mentioned above is satisfied and
$\eps_{i-1}\frac{R_i}{R_i'}\le\frac12\eps_i$, where $\eps_0:=\eps_1$.
This choice guarantees \eqref{Yi1}.
Since $\int_0^t Y(x^1,s)ds\le\frac1{L_{k-1}}Y(x^1,t)$, where $L_0:=L_1$,
\eqref{Leps} implies 
$$ (8+2p(M_k+1)^{p-1})\int_0^t Y(x^1,s)\,ds\le\frac13 Y(x^1,t),$$
hence  $\hat v_k(x,t)\le\frac23 Y(x^1,t)$.
In view of \eqref{contumvk} and \eqref{infY}, it follows that
$\tau=\min(\tilde T,T_*^m)$.
 Recalling $\R^n=\bigcup_{k=1}^m \tilde D_k'$ and \eqref{wkMk}, we deduce
 in particular that 
 $\sup_{t\in [0,\tau)}\|u_m(t)\|_\infty<\infty$.
Consequently, $T_*^m>\tilde T$ and \eqref{vkY} is proved.
This implies \eqref{NestDk0} and
concludes the proof of Theorem~\ref{thmNonMon}.
\end{proof}

\begin{proof}[Proof of Corollary~\ref{corNonMon}]
Let $\kappa=(p-1)^{-1/(p-1)}$ and
choose $M_k=\phi\big(\kappa^{-1}N_k\big)+N_k$ in Theorem~\ref{thmNonMon}.
By the lower blow-up rate estimate (see \cite[Proposition~23.1]{QS}), we have
$$\|u(\cdot,t_{2k})\|_\infty\ge \kappa(T-t_{2k})^{-\beta}.$$
It then follows from \eqref{nonmonotBU} that
$$ \begin{aligned}
\|u(\cdot,t_{2k-1})\|_\infty
&\ge M_k\ge \phi\big(\kappa^{-1}N_k\big)\ge \phi\big(\kappa^{-1}\|u(\cdot,t_{2k})\|_\infty\big)\\
&\ge \phi\big((T-t_{2k})^{-\beta}\big)\ge \phi\big((T-t_{2k-1})^{-\beta}\big).
\end{aligned}$$
\end{proof}

\begin{remark} \label{remTypeI} \rm
(i)
If $p<p_L$, then the sequences $N_k$, $M_k$ in Theorem~\ref{thmNonMon}
can be chosen such that the blow-up is of type I.
In fact, choose 
$$\hbox{$N_1$ satisfying \eqref{N1} 
and $N_1^{p-1}>8(h^{p-1}(0)+g^{p-1}(0))/\delta$, 
$N_{k+1}:=2^\beta N_k$ and $M_k:=2N_k$.}$$
Then, in view of \eqref{ustarpL}, we have
$$t_{k,1}=T^*-(h(0)/N_k)^{p-1}, \ \ t_{k,2}=T^*+(g(0)/N_k)^{p-1},
\ \ \Delta_k=
(h^{p-1}(0)+g^{p-1}(0))N_k^{1-p},$$
hence \eqref{Deltak} is true
and $T-T_k=\sum_{i=k}^\infty\Delta_i\le C/N_k^{p-1}$.
Notice (cf.~\eqref{tkmonot0}-\eqref{limtk}) that $0<t_{k,1}+T_k-T^*\nearrow T$.
If $t\in[t_{k,1}+T_k-T^*,t_{k+1,1}+T_{k+1}-T^*]$,
then
\be{Tt}
T-t\le T-t_{k,1}-T_k+T^*=T-T_k+(h(0)/N_k)^{p-1}\le C/N_k^{p-1}
\ee
and,
arguing as for \eqref{Ntildetk0}, we have
$\sup_x v_j(x,t)\le N_{k+1}$ if $j\ne k$.
Since $v_k\le M_k$, \eqref{NestDk} yields
$\|u(\cdot,t)\|_\infty\le \max(2,2^\beta)N_k+1$,
hence \eqref{Tt} implies that the blow-up is of type I.

\smallskip

(ii) Let $p\ge p_L=p_L(n)$. 
  Then there exists $m\in\{3,\dots,n-1\}$ such that $p_S(m)<p<p_L(m)$
   (one can choose, e.g.,~$m=\lfloor 3+\frac{4}{p-1}\rfloor$).
  If $v:\R^m\times[0,T) \to [0,\infty)$ is a solution with (type I)
  oscillatory blow-up, then $u(x_1,\dots,x_n,t)=v(x_1,\dots,x_m,t)$
  is a solution with (type I) oscillatory blow-up.
\qed \end{remark}  

\begin{remark} \rm
It follows from the proof of Theorem~\ref{thmNonMon} that the solution $u$ can be continued 
as a classical solution for $t\in[T,T')$ for some $T'>T$. However, this extension does not 
satisfy the property $u\in C((0,T'),L^\infty(\R^n))$ since $u(T)\notin L^\infty(\R^n)$.
\qed \end{remark}

\section{Proof of Theorems~\ref{thm1}, \ref{thm1a}}
\label{SecProof2}

\begin{proof}[Proof of Theorem~\ref{thm1}]
Set $\lambda^*:=\sup\{\lambda>0;\ T_\lambda=\infty\}$, where $T_\lambda$
is the maximal existence time of the solution $u_\lambda$. 
It is well known that $\lambda^*\in(0,\infty)$.

\smallskip

First consider the case $p>p_S$ (with $\Omega$ convex bounded). Then $T_{\lambda^*}<\infty$ (see, e.g., \cite[Section~28.4]{QS}), hence 
\be{unbdd0}
\lim_{t\to T_{\lambda^*}}\|u_{\lambda^*}(t)\|_\infty=\infty.
\ee
Moreover, by the definition of $\lambda^*$ and comparison, for all $\lambda\in(0,\lambda^*)$, we have $T_\lambda=\infty$
and we know that $\lim_{t\to T_\lambda}\|u_{\lambda}(t)\|_\infty=0$ (see \cite[Theorem~22.4*]{QS}).
By \eqref{unbdd0}, for any $K>0$, there exists $t_0\in(0,T_{\lambda^*})$ such that $\|u_{\lambda^*}(t_0)\|_\infty>K$.
By continuous dependence, for $\lambda\in(0,\lambda^*)$ close to $\lambda^*$, we have 
$\|u_\lambda(t_0)\|_\infty>K$.

\smallskip

Next consider the case $p=p_S$ (with $\Omega=B_R$ and $\psi$ radially symmetric).
Then the solution $u_{\lambda^*}$ is global unbounded 
(see \cite[Theorem~28.7*(ii)]{QS})
and \cite[Theorem~1]{QS25a} implies $\lim_{t\to\infty}\|u_{\lambda}(t)\|_\infty=0$ for $\lambda<\lambda^*$.
As above, the assertion follows from continuous dependence.
\end{proof}

\begin{proof}[Proof of Theorem~\ref{thm1a}.] 
Assume on the contrary that there exist 
initial data $u_{0,k}$ with $\|u_{0,k}\|_\infty\le A$  
and $t_{2k-1}<t_{2k}$ such that the corresponding solutions $u_k$
are defined on~$[0,t_{2k}]$, 
$U_k(t_{2k-1})>k$,
where $U_k(t):= \|u_k(\cdot,t)\|_\infty$,
and
$$U_k(t_{2k-1})>U_k(t_{2k}) \ \hbox{ for }\ k=1,2,\dots$$ 
If $k>A$, then there exists $\tilde t_k\in(0,t_{2k})$ 
such that $U_k(\tilde t_k)=\max_{[0,t_{2k}]}U_k$.
Choose $x_k\in\Omega$ such that 
$$u_k(x_k,\tilde t_k)>\max\big(U_k(\tilde t_k)-1,U_k(t_{2k})\big)\ge u_k(x_k,t_{2k})$$
and set $\tau_k:=\inf\{\tau\in[\tilde t_k,t_{2k}]:\partial_t u_k(x_k,\tau)\le0\}$.
Then 
$$\partial_tu_k(x_k,\tau_k)\le0\quad \hbox{and}\quad u_k(x_k,\tau_k)\ge u_k(x_k,\tilde t_k)>U_k(\tilde t_k)-1\ge U_k(\tau_k)-1.$$
Set $M_k:=U_k(\tau_k)$, $\lambda_k:=M_k^{(1-p)/2}$ and
$v_k(y,s):=\frac1{M_k}u_k(x_k+\lambda_ky,\tau_k+\lambda_k^2s)$.
Then $v_k$ is a solution of the equation $v_s-\Delta v=v^p$ in $\Omega_k\times(-\tau_k/\lambda_k^2,0]$,
where $\Omega_k:=\{y: x_k+\lambda_ky\in\Omega\}$, $0\le v_k\le1$, $v_k(0,0)>1-1/M_k$, $\partial_sv_k(0,0)\le0$
and $v_k(y,s)=0$ for $y\in\partial\Omega_k$. 
Passing to the limit we obtain an ancient solution $v$ 
of the equation $v_s-\Delta v=v^p$ either in $\R^n\times(-\infty,0]$
or (after changing the coordinate system) in $H^n\times(-\infty,0]$,
where $H^n=\{y\in\R^n:y_1>-c\}$ for some $c>0$,
satisfying the homogeneous Dirichlet boundary condition for $y\in\partial H^n$.
In addition, $0\le v\le1$, $v(0,0)=1$ and $v_s(0,0)\le0$.
In the case of the halfspace $H^n$ we can set $v(-c-y_1,y_2,\dots,y_n,s)=-v(-c+y_1,y_2,\dots,y_n,s)$
for $y_1>0$ to obtain an ancient solution in $\R^n\times(-\infty,0]$.
In both cases, \cite{Q21} guarantees the universal estimate $|v(y,s)|\le C(n,p)(-s)^{-1/(p-1)}$  
and \cite[Corollary~1]{MZ00} implies that $v$ depends only on $s$.
Consequently, $v_s(0,0)=v^p(0,0)=1$, which yields a contradiction.
\end{proof}

With some extra work, we can  actually obtain the following more general and stronger version of Theorem~\ref{thm1a}.

\begin{theorem} \label{thm1aN}
Assume $\lim_{s\to\infty}f(s)s^{-p}=\ell>0$ and $p\in(1,p_S)$.
Then property (P) is true. 
More precisely, for any $\eps\in(0,1)$ 
there exists $K=K(A,f,\Omega,\eps)>0$
such that 
$$\bigl(\|u_0\|_\infty\le A \mbox{ and } U(t_0)>K\bigr)
  \Longrightarrow U'(t)\ge(1-\eps)\ell U^p(t) \hbox{ for a.a. } t\in [t_0,T).$$ 
In particular, if $\|u_0\|_\infty\le A$ and $U(t_0)>K$, then $T(u_0)<\infty$.
\end{theorem}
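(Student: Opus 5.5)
We argue by contradiction and rescale, as in the proof of Theorem~\ref{thm1a}; the additional content is a quantitative lower bound on $U'$, which again comes from Liouville-type rigidity of the blow-up limits. Since $U$ is a supremum of classical solutions with bounded $u_t$ on compact time intervals, $U$ is locally Lipschitz on $(0,T)$. It is therefore differentiable a.e. At a differentiability point $t$, for every $\eta>0$ there is a point $x$ with $u(x,t)>U(t)-\eta$. A Danskin-type argument then shows
$$U'(t)\ge\liminf \partial_t u(x_j,t)$$
along suitable almost-maximizing sequences $x_j$.

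\emph{Step 1 (key claim).} For each $\eps>0$ there exist $K_1$ and $\eta_0\in(0,1)$ with the following property. If $\|u_0\|_\infty\le A$, $M:=M(t)=\sup_{[0,t]}U\ge K_1$, and $u(x,t)\ge(1-\eta_0)M$, then
$$\partial_tu(x,t)\ge(1-\eps)\ell\, u^p(x,t).$$
Suppose this fails for sequences $u_k,x_k,\tau_k$. We rescale exactly as before, with $\lambda_k=M_k^{(1-p)/2}$ and $v_k(y,s)=M_k^{-1}u_k(x_k+\lambda_ky,\tau_k+\lambda_k^2s)$, where now $M_k=M_k(\tau_k)\to\infty$. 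Then $v_k$ solves $v_s-\Delta v=f_k(v)$ with $f_k(v)=M_k^{-p}f(M_kv)$, which converges to $\ell v^p$ locally uniformly on $(0,1]$ and satisfies $|f_k(v)|\le C(v^p+M_k^{-p})$. We also have $0\le v_k\le1$ for $s\le0$ and $v_k(0,0)\ge1-\eta_0$. Because $\tau_k/\lambda_k^2\to\infty$ (using $\|u_0\|_\infty\le A$ and the local existence time bound), parabolic estimates give convergence to an ancient solution of $v_s-\Delta v=\ell v^p$ on $\R^n$ or a half-space. As in Theorem~\ref{thm1a}, we remove the half-space case by odd reflection, and then \cite{Q21} together with \cite[Corollary~1]{MZ00} forces $v=v(s)$. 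The limit satisfies $v(0,0)\ge1-\eta_0$ and $\partial_sv(0,0)\le(1-\eps)\ell v^p(0,0)$. But a spatially constant ancient solution satisfies $v_s=\ell v^p$ exactly, which is a contradiction provided $v(0,0)>0$; this holds once $\eta_0<1$. Since $\eta_0$ is arbitrary in the limiting argument, we may fix any $\eta_0$, say $\eta_0=1/2$.

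\emph{Step 2 (monotonicity propagates).} Take $t_0$ with $U(t_0)>K:=K_1$. We claim that $U(t)=M(t)$ for all $t\ge t_0$, provided that also $U(t_0)=M(t_0)$. Let $t$ be a point where $U(t)=M(t)$. By Step 1 and the Danskin inequality, the almost-maximizers $x$ (with $\eta\to0$) satisfy
$$U'_+(t)\ge\liminf\partial_tu\ge(1-\eps)\ell U^p(t)>0.$$
Hence $U$ strictly increases just after $t$, so the set $\{t\ge t_0:U=M\}$ is both open to the right and closed. This gives $U$ nondecreasing and $U'\ge(1-\eps)\ell U^p$ a.e. on $[t_0,T)$. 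Integrating this ODE inequality yields $T<\infty$.

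\emph{Step 3 (main obstacle).} The delicate point is the initial condition $U(t_0)=M(t_0)$, since a priori $U(t_0)>K$ does not imply it. We handle this by proving first, exactly as in the proof of Theorem~\ref{thm1a}, that $U$ cannot decrease after exceeding a large $K_2$. Suppose instead that a decrease occurs; choose $\tilde t$ where $U$ attains the maximum of $M$ on the relevant interval. Then the same rescaling at a point where $\partial_tu\le0$ near the maximum gives the contradiction $v_s(0,0)=\ell>0$. So with $K=\max(K_1,K_2)$ and $U(t_0)>K$, the function $U$ is nondecreasing on $[t_0,T)$, hence $U=M$ there up to the trivial reset of the reference time $t_0$. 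Step 2 then applies. A further technical point is justifying that the convergence $f_k\to\ell v^p$ holds uniformly enough near $v=1$, and that $v_k$ stays away from $0$ near the origin; both follow from $v(0,0)\ge1/2$ and equicontinuity.
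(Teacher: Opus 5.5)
Your proof is correct. The core ingredient is the same as the paper's: rescale at a point where $u$ is close to the past maximum $M(t)$, so that $v_k\le1$ on the whole past, then use the Liouville theorems and the ODE contradiction. Where you differ is in how this pointwise information is linked to $U$.

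The paper argues by contradiction at a differentiability point $\tau_k$ where $U_k'(\tau_k)<(1-\eps)U_k^p(\tau_k)$, and proceeds in three moves:
\begin{enumerate}
\item It passes to a record time $t_k$ with $U_k(t_k)=M_k(t_k)$. This is either $\tau_k$ itself or the earlier time at which $M_k(\tau_k)$ was attained.
\item At $t_k$, $U_k$ grows at most like $U_k(t_k)+\delta(1-\eps)U_k^p(t_k)$ for small $\delta$. A second-order Taylor expansion in time then extracts a near-maximal point with $\partial_tu_k\le(1-\eps/2)u_k^p$.
\item Only then does it rescale.
\end{enumerate}

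You go forward instead. First you prove the pointwise bound $u_t\ge(1-\eps)\ell u^p$ at every point where $u\ge\frac12M(t)$; this is essentially the paper's closing remark, with $M(t)$ in place of $U(t)$. You then propagate it through a lower Dini (Danskin) bound and a continuation argument for the record set $S=\{U=M\}$. Your route gives, as by-products, $D_+U\ge(1-\eps)\ell U^p$ at every time rather than a.e., together with the pointwise estimate. The paper's route is a single contradiction argument that never has to control $S$.

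Two steps need tightening.
\begin{itemize}
\item \textbf{Right-openness of $S$.} $D_+U(t)>0$ at $t\in S$ only gives $U>U(t)$ just after $t$; it does not give $U=M$ there. Argue instead as follows. For $t\in S$ and $s>t$, let $r$ be a maximum point of $U$ on $[t,s]$. Then $r\in S$ and $M(r)\ge K_1$, so $D_+U(r)>0$, which forces $r=s$. Hence $U(s)=M(s)$.
\item \textbf{The claim $U(t_0)=M(t_0)$ in Step~3.} Knowing that $U$ is nondecreasing on $[t_0,T)$ does not give $U(t_0)=M(t_0)$, because $M$ also sees times before $t_0$. You would have to apply (P) at every earlier $t'$ with $U(t')>U(t_0)>K_2$.

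The detour through Theorem~\ref{thm1a} is in fact unnecessary. Take $K>\max(K_1,A)$ and let $s_0$ be the first time $U$ reaches $K$. Since $\limsup_{t\to0}U(t)\le\|u_0\|_\infty\le A$, you get $s_0>0$ and $U(s_0)=M(s_0)=K$. Steps~1--2 started at $s_0$ then already give both (P) and the derivative bound on $[s_0,T)\supset[t_0,T)$.
\end{itemize}
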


\begin{proof}[Proof of Theorem~\ref{thm1aN}.]
Rescaling arguments show that we can assume $\ell=1$. 
The function $t\mapsto U(t)$ is locally Lipschitz continuous, hence $U'(t)$ exists a.e.
In fact, assume $0<t_1\le t<t+\delta\le t_2<T$ and notice that
$|u_t(x,\tau)|\le C_1$ if $x\in\Omega$ and $\tau\in[t_1,t_2]$.
Choose $x_1,x_2$ such that $U(t+\delta)<u(x_1,t+\delta)+\delta$ and $U(t)<u(x_2,t)+\delta$.
Then there exist $\theta_1,\theta_2\in(0,1)$ such that
$$
U(t+\delta)-U(t) \begin{cases}
 \le u(x_1,t+\delta)-u(x_1,t)+\delta
 = \delta u_t(x_1,t+\delta\theta_1)+\delta 
 \le (C_1+1)\delta, \\
 \ge u(x_2,t+\delta)-u(x_2,t)-\delta
 = \delta u_t(x_2,t+\delta\theta_2)-\delta
 \ge -(C_1+1)\delta,
\end{cases}
$$
hence $U$ is locally Lipschitz.
We will show the inequality $U'(t)\ge(1-\eps)U^p(t)$ for all $t\in [t_0,T)$
such that $U'(t)$ exists.

Assume on the contrary that there exist
solutions $u_k$ with initial data $u_{0,k}$, $\|u_{0,k}\|_\infty\le A$,
and $\tau_{0,k}\le \tau_k<T(u_{0,k})$ such that 
$U_k'(\tau_k)$ exists,
\be{Ukdelta0}
U_k(\tau_{0,k})>k
\quad\hbox{ and }\quad
U_k'(\tau_k)<(1-\eps)U_k^p(\tau_k),
\ee
where $U_k(t)=\|u_k(\cdot,t)\|_\infty$.
Denoting $M_k(t):=\sup_{\tau\le t}U_k(\tau)$,
we claim that there exist $t_{0,k}\le t_k<T(u_{0,k})$
and $\delta_k\in(0,T(u_{0,k})-t_k)$ such that
\be{Ukdelta}
U_k(t_{0,k})>k,\ \
U_k(t_k)=M_k(t_k)
\,\hbox{ and }\,
 U_k(t_k+\delta)\le U_k(t_k)+\delta(1-\eps)U_k^p(t_k),\ \delta\in[0,\delta_k].
\ee 
Indeed, if $U_k(\tau_k)=M_k(\tau_k)$, then we may take $t_{0,k}=\tau_{0,k}$ and $t_k=\tau_k$ in view of \eqref{Ukdelta0}.
If $U_k(\tau_k)\ne M_k(\tau_k)$,
then there exists $t_k<\tau_k$ such that $U_k(t_k)=M_k(\tau_k)\ge U_k(\tau_{0,k})>k$,
hence $U_k(t)\le U_k(t_k)$ for all $t\in [t_k,\tau_k]$,
and \eqref{Ukdelta} is true with $t_{0,k}=\min(\tau_{0,k},t_k)$ and $\delta_k=\tau_k-t_k$.

In addition, if $k$ is large enough, then we also have $t_k\ge\xi$ for some $\xi>0$,
and, by parabolic regularity, we have
\be{uttCk}
C_k:=\sup_{\Omega\times[t_k,t_k+\delta_k]}|\partial^2_{tt}u_k|<\infty.
\ee
We claim that 
\be{ukxktk} 
(\exists x_k\in\Omega) \quad u_k(x_k,t_k)>U_k(t_k)-1 \ \hbox{ and }\ \partial_tu_k(x_k,t_k)\le(1-\eps/2)u_k^p(x_k,t_k).
\ee
Assume that \eqref{ukxktk} fails. 
Fix $k$ and choose 
$$\delta\in\Bigl(0,\min\Bigl\{\delta_k,\frac{\eps U_k^p(t_k)}{4C_k}\Bigr\}\Bigr)$$
and 
$$\eta\in\Bigl(0,\min\Bigl\{1,\frac18 \delta\eps U_k^p(t_k)\Bigr\}\Bigr)\ \hbox{ such that }\
\Bigl(1-\frac\eps2\Bigr)(U_k(t_k)-\eta)^p>\Bigl(1-\frac{3\eps}4\Bigr)U_k^p(t_k).$$
Then there exist $\tilde x_k\in\Omega$ such that
$$ u_k(\tilde x_k,t_k)>U_k(t_k)-\eta \ \hbox{ and }\ \partial_tu_k(\tilde x_k,t_k)>(1-\eps/2)u_k^p(\tilde x_k,t_k).$$
Consequently, the Mean Value Theorem, \eqref{uttCk}, the choice of $\delta,\eta$ and \eqref{Ukdelta} imply
$$ \begin{aligned}
 u_k(\tilde x_k,t_k+\delta)
&\ge u_k(\tilde x_k,t_k)+\delta(1-\eps/2)u_k^p(\tilde x_k,t_k)
  + \int_0^\delta (\delta-\theta)\partial^2_{tt}u_k(\tilde x_k,t_k+\theta)\,d\theta \\
&\ge U(t_k)-\eta +\delta(1-\eps/2)(U_k(t_k)-\eta)^p-C_k\delta^2/2 \\
&\ge U(t_k)+\delta(1-\eps)U_k^p(t_k)+\delta\eps U_k^p(t_k)/4-\eta-C_k\delta^2/2 \\
&> U(t_k)+\delta(1-\eps)U_k^p(t_k)  \ge U_k(t_k+\delta),
\end{aligned}$$
which is a contradiction.
Hence \eqref{ukxktk} is true. 

The proof is then concluded by the same argument as in the proof of Theorem~\ref{thm1a},
except that the equation for $v$ becomes 
$v_s-\Delta v=f(M_kv)/M_k^p$ instead of $v_s-\Delta v=v^p$
and we have $v_s(0,0)\le(1-\eps/2)v^p(0,0)$ instead of $v_s(0,0)\le 0$.
\end{proof}

\begin{remark} \rm
The proof of Theorem~\ref{thm1aN} shows that, given $c\in(0,1)$
there exists $K=K(A,f,\Omega,\eps,c)>0$ such that
$$\bigl(\|u_0\|_\infty\le A,\ U(t_0)>K,\ t\in [t_0,T),\ u(x,t)\ge cU(t)\bigr)
  \Longrightarrow u_t(x,t)\ge(1-\eps)f(u(x,t)).$$ 
In addition, we analogously 
 obtain the inequality $u_t(x,t)\le(1+\eps)f(u(x,t))$,
hence $|u_t(x,t)-f(u(x,t))|\le \eps f(u(x,t))$.
This property is similar to the ODE blow-up behavior
described in \cite{MZ,MZ00}.
\qed \end{remark}

\section{Proof of Propositions~\ref{thm2}-\ref{prop4}}
\label{SecProof3}

\begin{proof}[Proof of Proposition~\ref{thm2}]
Let $H=\frac12|\nabla u|^2+F(u)$ and set $Q:=\Omega\times (0,t_0]$.
Note that 
\be{regulH}
H\in BC(\overline Q)\cap C^{2,1}(Q)
\cap C^{1,0}(\overline\Omega\times (0,t_0])
\ee
by parabolic regularity
and that $K:=F(M(t_0))\ge\|H(0)\|_\infty$ by assumption~\eqref{hypFML}.
Assume for contradiction that the conclusion is not true.
Then there exist $\eta>0$ and $(x_0,t_0)\in Q$ such that 
$\tilde H:=H-K-\eta$ satisfies $\tilde H(x_0,t_0)>0$.
Set  $\tilde\Sigma:=Q\cap \{\tilde H>0\}$ and define the vector field
$$B:=|\nabla u|^{-2}(2f(u)\nabla u-\nabla H)\quad\hbox{on $\{|\nabla u|>0\}$.}$$
One can check (see \cite{Sp, FML} or the proof of \cite[Proposition 24.4a]{QS}) 
that 
\be{SestBUprofileLower4}
|\nabla u|^2\ge 2\eta\quad\hbox{and}\quad\tilde H_t-\Delta \tilde H\le B\cdot\nabla H 
\quad\hbox{on $\tilde\Sigma$}
\ee
and  (owing to the convexity of $\Omega$)
\be{SestBUprofileLower5}
\partial_\nu \tilde H\le 0\quad\hbox{on $\partial\Omega\times(0,t_0]$.}
\ee
It follows from \eqref{SestBUprofileLower4} that
$$\mathcal{P}\tilde H:=\tilde H_t-\Delta \tilde H -C|\nabla \tilde H| \le 0\quad\hbox{on $\tilde\Sigma$,}$$
where $C=2\eta^{-1/2}\sup_{Q}f(u)$.
To take care of the possible unboundedness of $\Omega$, we now set
$$\psi = \tilde H-\eps\phi,\qquad \phi=(n+1+C)t+(1+|x|^2)^{1/2}, $$
where $\eps>0$ is small enough so that $\psi(x_0,t_0)>0$.
By direct computation, we have
$\mathcal{P}\phi\ge 1$, hence
$$\mathcal{P}\psi\le \mathcal{P}\tilde H_t-\eps\mathcal{P}\phi<0
\quad\hbox{in $Q\cap \{\psi>0\}$}.$$
Since $\psi(\cdot,0)\le 0$ and $\psi\le 0$ in $\overline Q\cap\{|x|>R\}$ for $R=R(\eps)>0$ large, owing to \eqref{regulH},
$\psi$ attains its positive maximum over $\overline Q$ at some $(x_1,t_1)\in\overline\Omega\times(0,t_0]$.
If $x_1\in\Omega$, then at this point we have $\psi_t\ge 0$, $\Delta \psi\le 0$, $\nabla \psi=0$, hence
$0\le \mathcal{P}\psi<0$, which is impossible.
Therefore, $x_1\in\partial\Omega$. But at this point, recalling  \eqref{regulH}, we have $\partial_\nu\psi>0$ by the Hopf Lemma.
However, $\partial_\nu\psi=\partial_\nu \tilde H-\eps (x_1\cdot\nu)(1+|x_1|^2)^{-1/2}\le 0$
by \eqref{SestBUprofileLower5} and the convexity of $\Omega$ (assuming $0\in\Omega$ without loss of generality).
This is a contradiction, which concludes the proof.
\end{proof}

\begin{proof}[Proof of Proposition~\ref{prop3}.]
(i) Taking supremum in \eqref{hypFML3}, we get $F(U(t_1))\le F(U(t_0))$.
Since $F$ is an increasing bijection from $[0,\infty)$ onto itself in view of \eqref{hypf},
we deduce that $U(t_1)\le U(t_0)$. 

\smallskip

(ii) The sufficiency is given by Proposition~\ref{thm2}, 
whereas the necessity follows from assertion (i).
\end{proof}

\begin{remark} \label{gapFML} \rm
At~p.433,~l.13 of \cite{FML}, the proof requires that $J:=\frac12|\nabla u|^2+F(u)-F(U(t_0))\le 0$ at any point of $\Omega\times(0,t_0)$ where $\nabla u=0$. 
But, since $F$ is an increasing function, this is precisely equivalent to $U(t_0)=M(t_0)$.
In fact, earlier in the paper, at~p.425, it is claimed that ``by the maximum principle, [\dots] $U(t)$ is monotone increasing in $t$'',
which would indeed imply $U(t_0)=M(t_0)$, but this statement is not correct.
\qed \end{remark}

\smallskip

\begin{proof}[Proof of Proposition~\ref{prop4}.]
(i) We denote $J=\Omega$, $r=x\in J$ if $n=1$, and
$J=[0,R)$,  $r=|x|\in J$ if $n\ge 2$ and $u_0$ is radial.
We can assume $u_0\not\equiv0$.
We know that, for all $t\in(0,T)$, 
\be{Ut0}
\hbox{$\{r\in J;\ u_r(r,t)=0\}$ is finite}
\ee
and that its cardinal $N(t)$ is
a nonincreasing function of $t$ and drops if $u_r$ has a multiple zero  (see, e.g.,~\cite[Section~52.8]{QS}).
Consequently, given $t_0\in(0,T)$, there exists $\eta>0$ such that $u_r(\cdot,t)$ has only simple zeros for all $t\in I^-\cup I^+$,
where $I^-=(t_0-\eta,t_0)$ and $I^+=(t_0,t_0+\eta)$.
As a consequence of the implicit function theorem, it follows in particular that the set of zeros of 
$u_r(\cdot,t)$ for $t\in I^-\cup I^+$ consists of finite numbers $k^\pm\ge 1$ of curves $r^\pm_i\in C^1(I^\pm)$ 
with $r^\pm_1(t)<\dots<r^\pm_{k^\pm}(t)$. 
Moreover, the limits $r^\pm_i(t_0)=\lim_{t\to t_0^\mp}r^\pm_i(t)$ exist
(otherwise, one of these curves would satisfy $\alpha_1:=\liminf \gamma(t)<\limsup\gamma(t)=:\alpha_2$ as $t\to t_0^+$ or $t_0^-$, which
would lead to $u_r(\cdot,t_0)\equiv 0$ on $[\alpha_1,\alpha_2]$: a contradiction to \eqref{Ut0}).

We have 
\be{Ut1}
U(t)=\max_{1\le i\le k^\pm} u(r^\pm_i(t),t),\quad t\in I^\pm,
\ee
and this remains true for $t=t_0$ by the continuity of the functions 
$U(t)$ and $u(r^\pm_i(t),t)$ as $t\to t_0^\pm$.
Denote
$$\Sigma^\pm=\bigl\{i\in \{1,\dots,k^\pm\};\ u(r^\pm_i(t_0),t_0)=U(t_0)\bigr\}\ne\emptyset.$$
By continuity, there exists $\eta_1\in(0,\eta)$ such that
$u(r^\pm_i(t),t)<U(t)$ 
if $i\not\in \Sigma^\pm$ and $t\in I^\pm\cap (t_0-\eta_1,t_0+\eta_1)$.
By \eqref{Ut1}, we deduce that
\be{Ut4}
U(t)=\max_{i\in\Sigma^\pm} u(r^\pm_i(t),t),\quad t\in I^\pm\cap (t_0-\eta_1,t_0+\eta_1).
\ee

We claim that $U'_\pm(t_0)$ exist and are given by
\be{diffpm}
U'_+(t_0)=\sigma^+:=\max_{i\in \Sigma^+} u_t(r^+_i(t_0),t_0),\quad U'_-(t_0)=\sigma^-:=\min_{i\in \Sigma^-} u_t(r^-_i(t_0),t_0)\le \sigma^+.
\ee
Let $i\in \Sigma^\pm$. 
We have
$$u(r^\pm_i(t),t)=u(r^\pm_i(t),t_0)+\int_{t_0}^t u_t(r^\pm_i(t),s)ds
\le U(t_0)+\int_{t_0}^t u_t(r^\pm_i(t),s)ds,\quad t\in I^\pm$$
hence, by the continuity of $u_t$,
$$
u(r^\pm_i(t),t)\le U(t_0)+(1+o(1))u_t(r^\pm_i(t_0),t_0)(t-t_0),\quad t\to t_0^\pm.
$$
By \eqref{Ut4} we deduce that
\be{mi2}
U(t)\le U(t_0)+(1+o(1))\sigma^\pm(t-t_0),\quad t\to t_0^\pm.
\ee
On the other hand, we have 
$$
U(t)\ge u(r^\pm_i(t_0),t)=u(r^\pm_i(t_0),t_0)+(1+o(1))u_t(r^\pm_i(t_0),t_0)(t-t_0),\quad t\to t_0^\pm,\quad i\in \Sigma^\pm,
$$
hence
$$U(t)\ge U(t_0)+(1+o(1))\sigma^\pm(t-t_0),\quad t\to t_0^\pm.$$
This along with \eqref{mi2} proves the claim, hence the first part of assertion (i).

\smallskip

Let us show the second part of assertion (i).
Set $E:=\{t\in(0,T):U'_+(t)>U'_-(t)\}$ and fix $\eps>0$.
The set $D_\eps:=\{t\in(\eps,T):\hbox{$u_r(\cdot,t)$ has a multiple zero}\}$
is finite. Fix an open interval $I\subset(\eps,T)\setminus D_\eps$. As above,
the set of zeroes of $u_r$ in $J\times I$ consists of finite number $k$ of curves
$r_i\in C^1(I)$ with $r_1(t)<r_2(t)<\dots<r_k(t)$. Set $\varphi_i(t):=u(r_i(t),t)$
and notice that $\varphi'_i(t)=u_t(r_i(t),t)$ due to $u_r(r_i(t),t)=0$.
Now \eqref{diffpm} implies $E\cap I\subset\bigcup_{i\ne j}A_{ij}$,
where 
$$ A_{ij}:=\{t\in I: \varphi_i(t)=\varphi_j(t)\hbox{ and }\varphi_i'(t)>\varphi_j'(t)\} $$
is obviously isolated in $I$. Consequently, $E$ is at most countable.

\smallskip

(ii) Assume that $U'_-(t_0)<0$.
By \eqref{diffpm}, there exists $r_0\in J$ 
such that $u(r_0,t_0)=U(t_0)$ and $u_t(r_0,t_0)<0$.
Moreover $r_0>0$ if $n\ge 2$.
By \eqref{Ut0} there exists $\eps>0$ such that $u_r(r,t_0)<0=u_r(r_0,t_0)$ for $r_0<r<r_0+\eps$.
Set 
$$\hat J(r)=\frac12 |u_r(r,t_0)|^2+F(u(r,t_0))-F(U(t_0)).$$
Then,  by continuity,  
we have
$$\hat J_r=u_r(u_{rr}+f(u))=u_r \bigl(u_t-(n-1)r^{-1}u_r\bigr)>0,\quad\hbox{for $r-r_0>0$ small}.$$
Consequently $\hat J(r)>\hat J(r_0)=0$ for 
$r-r_0>0$ small, hence \eqref{conclFML4} fails.
\end{proof}


\noindent{\bf Declarations.} The authors report that there are no competing interests to declare. This manuscript has no associated data.

\smallskip

\noindent{\bf Acknowledgement.}
The first author was supported in part by the Slovak Research and Development Agency
under the contract No.~APVV-23-0039 and by VEGA grant 1/0245/24.

\end{document}